\definecolor{old}{gray}{0.25}
\let\oldmarginpar\marginpar
\renewcommand\marginpar[1]{%
\-\oldmarginpar[\raggedleft\footnotesize{\color{red} #1}]%
{\raggedright\footnotesize{\color{red} #1}}}
\newtheoremstyle{Theorem}%
  {3pt}%                  space above
  {3pt}%                  space below
  {\itshape}%              body font {\normalfont}
  {}%                     indent or {\parindent}
  {\normalfont\bfseries}% head font
  {.}%                    head punctuation
  {0.5em}%                head space
  {{\normalfont\textbf{\thmname{#1}\thmnumber{ #2}}\thmnote{ #3}}}
\newtheoremstyle{Definition}%
  {3pt}%                  space above
  {3pt}%                  space below
  {}%                     body font {\normalfont}
  {}%                     indent or {\parindent}
  {\normalfont\bfseries}% head font
  {.}%                    head punctuation
  {0.5em}%                head space
  {{\normalfont\textbf{\thmname{#1}\thmnumber{ #2}}\thmnote{ #3}}}
\theoremstyle{Theorem}
\newtheorem{Theorem}{Theorem}[section]
\newtheorem*{Theorem*}{Theorem}
\newtheorem{Proposition}[Theorem]{Proposition}
\newtheorem{Lemma}[Theorem]{Lemma}
\theoremstyle{Definition}
\newtheorem{Remark}[Theorem]{Remark}
\newtheorem*{Remark*}{Remark}
\newtheorem{Definition}[Theorem]{Definition}
\newtheorem{Example}[Theorem]{Example}
\newtheorem{Assumption}[Theorem]{Assumption}
\renewcommand{\epsilon}{\varepsilon}
\renewcommand{\rho}{\varrho}
\newcommand{\md}{\ensuremath{\,\textup{d}}}
\newcommand{\domain}{\ensuremath{\mathcal O}}
\newcommand{\hs}{\ensuremath{\mathcal L_{\text{HS}}}}
\newcommand{\Lip}[2]{\ensuremath{C^{\textup{Lip}}_{\tau, {#1}, {#2}}}}
\newcommand{\R}{\mathbb{R}}
\newcommand{\N}{\mathbb{N}}
\renewcommand{\H}{\ensuremath{\mathcal{H}}}
\newcommand{\U}{\ensuremath{U}}
\newcommand{\G}{\ensuremath{\mathcal{G}}}
\author{P.A. Cioica, S. Dahlke, N. D\"{o}hring, U. Friedrich, S. Kinzel, \\F. Lindner, T. Raasch, K. Ritter, R.L. Schilling}
\title{On the convergence analysis of the inexact linearly implicit Euler scheme for a class of SPDEs\thanks{This work has been supported by the Deutsche Forschungsgemeinschaft (DFG, grants DA 360/12-2, DA 360/13-2, DA 360/20-1, RI 599/4-2, SCHI 419/5-2) and a doctoral scholarship of the Philipps-Universit\"{a}t Marburg.}}
\date{}
\begin{document}

\frenchspacing

\maketitle
\begin{abstract} This paper is concerned with the adaptive numerical treatment of stochastic partial differential equations. Our method of choice is Rothe's method. We use the implicit Euler scheme for the time discretization. Consequently, in each step,  an  elliptic equation with random right-hand side has to be solved.  In practice, this  cannot be performed exactly, so that efficient numerical methods are needed. Well-established adaptive wavelet or finite-element schemes, which are guaranteed to converge with optimal order, suggest themselves. 
We investigate  how the errors corresponding to the adaptive spatial discretization propagate  in time,  and we show how  in each time step the tolerances
have to be chosen such that the resulting  perturbed discretization scheme realizes the same order of convergence 
%(in time direction) 
as the one  with exact evaluations of the elliptic subproblems. 
\end{abstract}

\noindent
\textbf{MSC 2010:} Primary: 60H15, 60H35; secondary: 65M22. 
\newline
\textbf{Key words:} 
Stochastic evolution equation, stochastic partial differential equation, Euler scheme, Rothe's method,
adaptive numerical algorithm, convergence  analysis.

\bigskip

\section{Introduction}\label{sec:introduction}

This paper is concerned with the numerical treatment of stochastic evolution equations of the form 
\begin{equation}
\label{SPDE}
\md u(t) = \big(A u(t) + f(u(t))\big) \md t + B(u(t)) \md W(t), \quad u(0) = u_0,
\end{equation}
on the time interval $[0,T]$ in a real and separable Hilbert space $U$. Here, $A:D(A)\subset \U\to\U$ is a densely defined, strictly negative definite, self-adjoint, linear operator such that zero belongs to the resolvent set and the inverse $A^{-1}$ is compact on $U$. The forcing terms $f:D((-A)^\rho)\to D((-A)^{\rho-\sigma})$ and $B: D((-A)^\rho)\to \mathcal L(\ell_2,D((-A)^{\rho-\beta}))$ are Lipschitz continuous maps for suitable constants $\rho$, $\sigma$ and $\beta$; and finally,  $W=(W(t))_{t\in[0,T]}$ is a cylindrical Wiener process on the sequence space $\ell_2=\ell_2(\mathbb N)$. 
In practical applications, evolution equations of the form \eqref{SPDE} are abstract formulations of stochastic partial differential equations (SPDEs, for short): Usually $A$ is a differential operator, $f$ a linear or nonlinear forcing term and $B(u(t)) \md W(t)$ describes additive or multiplicative noise.
They are models, e.g., for reaction diffusion processes corrupted by noise, which are frequently used for the mathematical description of biological, chemical and physical processes. 
Details on the equation, the operators $A$, the forcing terms $f$ and $B$ and the initial condition $u_0$ are given in Section~\ref{sec:setting-and-assumptions}.
Usually, the exact solution of (\ref{SPDE}) cannot be computed explicitly, so that  numerical schemes for the constructive approximation of the solutions are needed. For stochastic parabolic equations,   there are two principally different approaches: the vertical method of lines and the horizontal method of lines. The former 
starts with an approximation first in space and then in time. We refer to \cite{GyoMil2009, GyoKry2010, Gyo2014,  Hal2012} for detailed information. The  latter 
starts with a discretization first in time and then in space; it is also known as Rothe's method. 
In the stochastic setting, it has been studied, e.g., in \cite{bib:GrTu95, bib:BrGr97}. These references are indicative and by no means complete. 

Very often, the vertical method of lines is preferred since, at first sight, it seems to be a little bit simpler.  Indeed, after the disretization in space is performed,  just an ordinary finite dimensional stochastic differential equation (SDE, for short) in time direction has to be solved, and there exists a huge amount of approaches for the numerical treatment of SDEs. However, there are also certain drawbacks.  In many applications, the   utilization  of adaptive strategies to increase efficiency is almost unavoidable.  In the context of the vertical method of lines, the combination with spatial adaptivity is at least not straightforward. In contrast,  for the horizontal   method of lines,  the following natural approach suggests itself. 
Using Rothe's method, the SPDE can be interpreted as an abstract Cauchy problem, i.e., as a stochastic differential equation in some suitable function spaces.  Then,  in time direction we might use an SDE-solver with step size control. This solver must be based on an implicit discretization scheme since the equation under consideration is usually stiff. Consequently, in each time step, an elliptic equation with random right-hand side has to be solved.
To this end, as a second level of adaptivity, adaptive numerical schemes that are well-established for deterministic equations, can be used. We refer to \cite{bib:CDD01, bib:CDD02, bib:DFRSW07} for suitable wavelet methods, and to \cite{bib:Verfuerth94, bib:Verfuerth96, bib:HansboJohnson92, bib:BabuskaRheinboldt83, bib:Babuska88, bib:BankWeiser85, bib:Erik94, bib:ErJo91, bib:ErJo95, bib:ErJoLa98, bib:BoErKo96} 
for the finite element case. As before, these lists are not complete. 

Although this combination   with adaptive strategies is natural, the mathematical  analysis of the resulting  schemes  seems to be still in its infancy. In the stochastic setting, Rothe's method with exact evaluation of the elliptic subproblems, 
has been considered, e.g., in \cite{bib:BrGr97, bib:GrTu95}, and explicit convergence rates have been established, e.g., in \cite{bib:Cox2012, bib:CVN2012, bib:GyNu97, bib:Printems2001}.  
First results concerning  the combination with adaptive space discretization methods
based on wavelets  have been shown in \cite{bib:KovLarUrb2013}. 

Even  for the deterministic case, not many results concerning a rigorous convergence and complexity analysis of the overall scheme seem to be available.  To our best knowledge, the most far reaching achievements  have been obtained in \cite{bib:CDDFKLRRS2014}.  In this paper, it has been clarified how the tolerances for the elliptic subproblems in each time step have to be tuned so that the overall (perturbed)  discretization scheme realizes the same order of convergence (in time direction) as the unperturbed one. Moreover, based on concepts from  approximation theory and function space theory, respectively,  a complexity analysis of the overall scheme has been derived. 
It is the aim of this paper to generalize the analysis presented in \cite{bib:CDDFKLRRS2014} to SPDEs of the form~\eqref{SPDE}.  
We mainly consider the case of the implicit Euler scheme, and we concentrate on the convergence analysis. 
To our best knowledge, no result in this direction has been reported yet. 
Complexity estimates are beyond  the scope of this work  and will be presented in a forthcoming paper.

For reader's convenience, let us briefly recall the basic approach of \cite{bib:CDDFKLRRS2014} for the deterministic case, confined to the implicit Euler scheme. 
As a typical example, let us consider the deterministic heat equation 
\begin{equation*}%\label{eq:example-heat-equation}
%\left.
\begin{aligned}
 u^\prime (t) &= \Delta u(t) + f(t, u(t)) \quad &&\text{on } \domain,\ t\in(0,T],  \\
 u &= 0 \quad &&\text{on } \partial\domain,\ t\in(0,T],\\ 
 u(0) &= u_0\quad &&\text{on } \domain, 
\end{aligned}
%\qquad \right\}
\end{equation*}
where $\domain\subset\R^d$, $d\geq 1$, denotes a bounded Lipschitz domain. 
We discretize this equation by means of a linearly implicit Euler scheme with uniform time steps. Let $K\in\mathbb{N}$ be the number of subdivisions of the time interval $[0,T]$. The step size will be denoted by $\tau := T/K$, and the $k$-th point in time is denoted by $t_k:=\tau k$, $k \in\{0, \ldots, K\}$. The linearly implicit Euler scheme, starting at $u_0$, is given by 
\begin{equation*} %\label{eq:example_leads-to-euler-scheme}
\frac{u_{k+1} - u_k}{\tau} = \Delta u_{k+1} + f(t_k, u_k),
\end{equation*}
i.e.,
\begin{equation} \label{eq:example_the-linear-euler-scheme}
(I - \tau \Delta) u_{k+1} = u_k + \tau f(t_k, u_k),
\end{equation}
for $k=0,\ldots,K-1$. If we assume that the elliptic problem
\begin{equation*} %\label{eq:example_elliptic-equation}
L_{\tau} v := (I - \tau \Delta) v = g \quad \text{on } \domain, \quad v|_{\partial \domain} = 0, 
\end{equation*}
can be solved exactly, then one step of the scheme \eqref{eq:example_the-linear-euler-scheme} can be written as
\begin{equation} \label{eq:example_base-of-abstract-euler-scheme}
u_{k+1} = L_{\tau}^{-1}R_{\tau,k}(u_k),
\end{equation}
where
\begin{equation*} %\label{eq:def-von-R}
R_{\tau, k}(w) := w + \tau f(t_k, w)
\end{equation*}
and $L_\tau$ is a boundedly invertible operator 
between suitable Hilbert spaces. 

In practice, the elliptic problems in~\eqref{eq:example_base-of-abstract-euler-scheme} cannot be evaluated exactly. 
Instead, we employ a `black box' numerical scheme, which for any prescribed tolerance $\varepsilon>0$  yields an approximation $[v]_\varepsilon$ of $v:=L_\tau^{-1}R_{\tau,k}(w)$, where $w$ is an element of a suitable Hilbert space, i.e.,
\begin{equation*}%\label{eq:intro-epsilons}
\lVert v-[v]_\varepsilon\rVert\leq \epsilon,
\end{equation*}
for a proper norm $\lVert\cdot\rVert$.  
What we have in mind are applications of adaptive wavelet solvers, which are guaranteed to converge with optimal order, as developed, e.g., in \cite{bib:CDD01}, combined with efficient evaluations of the nonlinearities $f$ as they can be found, e.g., in \cite{bib:Kap2011, bib:DahSchXu2000, bib:CohDahDeV2000}. 
In \cite{bib:CDDFKLRRS2014} we have  investigated  how  the error propagates within the linearly implicit Euler scheme and how the tolerances $\epsilon_k$ in each time step have to be chosen, such that we obtain the same order of convergence as in the case of exact evaluation of the elliptic problems. We have shown that the tolerances depend on the Lipschitz constants $\Lip jk$ of the operators \begin{equation*}
E_{\tau,j,k}=(L_\tau^{-1} R_{\tau,k-1}) \circ (L_\tau^{-1} R_{\tau,k-2}) \circ \dots \circ (L_\tau^{-1} R_{\tau, j}),
\end{equation*}
with $1\leq j\leq k\leq K$, $K\in\N$, via
\begin{equation*}
\lVert u(t_k) - \tilde{u}_k\rVert \leq \lVert u(t_k) - u_k\rVert + \sum_{j=0}^{k-1} \Lip{j+1}{k} \, \epsilon_{j},
\end{equation*}
where $\tilde u_k$ is the solution to the inexactly evaluated Euler scheme at time $t_k$.

Now let us come back to  SPDEs of the form (\ref{SPDE}).  Once again, for the (adaptive) numerical treatment of (\ref{SPDE}) we  consider for $K\in\mathbb N$ and $\tau:=T/K$ the linearly implicit Euler scheme
\begin{equation}\label{eq:EulerSPDE}
\left.
\begin{alignedat}{2}
u_{k+1}&=(I-\tau A)^{-1}&\big(u_k+\tau f(u_k)+\sqrt\tau B(u_k)\chi_{k}\big),\\
&&k=0,\ldots,K-1,
\end{alignedat}\qquad
\right\}
\end{equation}
with
\begin{equation*}%\label{eq:stoch-incr}
\chi_{k}:=\chi_{k}^K:=\frac 1{\sqrt\tau}\left(W\big(t_{k+1}^K\big)-W\big(t_k^K\big)\right),
\end{equation*}
where 
$t_k:=\tau k$, $k=0,\ldots,K$.
If we set 
\begin{align*}%\label{eq:intro-SPDEinterpretation1}
R_{\tau,k}(w)&:= w+\tau f(w)+\sqrt{\tau} B(w)\chi_{k},&\quad& k=0,\ldots,K-1,\\
L_{\tau}^{-1} w&:=(I-\tau A)^{-1}w,&& k=1,\ldots,K,
\end{align*}
the operators being defined between suitable Hilbert spaces $\mathcal{H}_k$ and $\mathcal{G}_k$, the scheme~\eqref{eq:EulerSPDE} can  again be rewritten as
\begin{equation}\label{eq:intro-short}
u_{k+1}=L_{\tau}^{-1}R_{\tau,k}(u_k),\quad k=0,\ldots,K-1.
\end{equation}
We refer to Section~\ref{sec:semi-discretization-in-time} for a precise formulation of this scheme.

Once again  the elliptic problems in~\eqref{eq:intro-short} cannot  be evaluated exactly.
Similar to the deterministic setting,
we assume that we have at hand a `black box' numerical scheme, which for any required $w$ approximates 
\[
v:=(I-\tau A)^{-1} \big(w + f(w) + B(w)\chi_k \big)
\]
with a prescribed tolerance $\varepsilon>0$.
What we have in mind are applications of some deterministic solver for elliptic equations to individual realizations, e.g., an optimal adaptive wavelet solver as developed in \cite{bib:CDD01}, combined with proper evaluations of the nonlinearities $f$ and $B$, see, e.g., \cite{bib:Kap2011, bib:DahSchXu2000, bib:CohDahDeV2000}, and an adequate truncation of the noise.
It is the aim of this paper to  investigate  how  the error propagates within the linearly implicit inexact Euler scheme for SPDEs (cf.~Proposition~\ref{prop:OverallErrorEstimateSPDE}) and how the tolerances $\epsilon_k$ in each time step have to be chosen, such that we obtain the same order of convergence (in time direction) for the inexact scheme as for its exact counterpart (cf.~Theorem~\ref{thm:central-theorem-in-stochastic-section}). 

Concerning the setting, we follow \cite{bib:Printems2001} and impose rather restrictive conditions on the different parts of Eq.~\eqref{SPDE}. 
This allows us to focus on our main goal, i.e., the analysis of the error of the inexact counterpart of the Euler scheme~\eqref{eq:EulerSPDE}, without spending too much time on explaining details regarding the underlying setting, cf.~Remark~\ref{rem:Setting}.
Compared with \cite{bib:Printems2001} we allow the spatial regularity of the whole setting to be `shifted' in terms of the additional parameter $\rho$. In concrete applications to parabolic SPDEs, this will lead to estimates of the discretization error in terms of the numerically important energy norm, cf.\ Example~\ref{ConcreteExampleSPDE2}, provided that the initial condition $u_0$ and the forcing terms $f$ and $B$ are sufficiently regular.

A different approach has been presented in \cite{bib:KovLarUrb2013}, where additive noise is considered, a splitting method is applied, and adaptivity is only used for the deterministic part of the equation. We remark that the use of spatially adaptive schemes is useful especially for stochastic equations, where singularities appear naturally near the boundary due to the irregular behaviour of the noise, cf. \cite{bib:CioDahDoe+2014b} and the references therein.
 
We choose the following outline. In Section~\ref{sec:setting-and-assumptions} we present the setting and some examples of equations that fit into this setting. In Section~\ref{sec:semi-discretization-in-time} we show how to reformulate the linearly implicit Euler scheme as an abstract Rothe scheme and derive convergence rates under the assumption that we can evaluate the subproblems~\eqref{eq:intro-short} exactly. We drop this assumption in Section~\ref{sec:discretization-in-time-and-space} and focus on how to choose the tolerances for each subproblem, such that we can achieve the same order of convergence.

\section{Setting}
\label{sec:setting-and-assumptions}

In this section we describe the underlying setting in detail. It coincides with the one in~\cite{bib:Printems2001} (`shifted' by $\rho\geq 0$). Furthermore we define the solution concept under consideration and give some examples of equations, which fit into this setting.

We start with assumptions on the linear operator in Eq.~\eqref{SPDE}.

\begin{Assumption}\label{AssumptionASPDE}
The operator $A:D(A)\subset \U\to\U$ is linear, densely defined, strictly negative definite and self-adjoint. Zero belongs to the resolvent set of $A$ and the inverse $A^{-1}:\U\to\U$ is compact. There exists an $\alpha>0$ such that $(-A)^{-\alpha}$ is a trace class operator on $\U$.
\end{Assumption}

To simplify notation, the separable real Hilbert space $\U$ is always assumed to be infinite-dimensional.
Under the assumption above, it follows that $A$ enjoys a spectral decomposition of the form
\begin{equation}
\label{eq:spectralDecomposition}
Av=\sum_{j\in\mathbb{N}}\lambda_j\langle v,e_j\rangle_U e_j,\qquad v\in D(A),
\end{equation}
where $(e_j)_{j\in\mathbb{N}}$ is an orthonormal basis of $\U$ consisting of eigenvectors of $A$ with strictly negative eigenvalues $(\lambda_j)_{j\in\mathbb{N}}$ such that
\begin{equation}
\label{eq:eigenvalues}
0>\lambda_1\geq\lambda_2\geq\ldots\geq\lambda_j\to-\infty,\qquad j\to\infty.
\end{equation}

For $s\geq 0$ we set
\begin{align}
D((-A)^s)&:=\Big\{v\in \U \,:\;\sum_{j=1}^\infty\big|(-\lambda_j)^{s}\langle v, e_j\rangle_U\big|^2<\infty\Big\},\label{eq:defDomain}\\
(-A)^s v&:=\sum_{j\in\N}(-\lambda_j)^s\langle v, e_j\rangle_U e_j,\;\quad \,v\in D((-A)^s),\label{(-A)^s}
\end{align}
so that $D((-A)^s)$, endowed with the norm $\lVert\,\cdot\,\rVert_{D((-A)^s)}:=\lVert(-A)^s\,\cdot\,\rVert_U$, is a Hilbert space; by construction this norm is equivalent to the graph norm of $(-A)^s$.

For $s<0$ we define $D((-A)^s)$ as the completion of $\U$ with respect to the norm $\lVert\,\cdot\,\rVert_{D((-A)^s)}$, defined on $U$ by $\|v\|^2_{D((-A)^s)}:= \sum_{j\in\N}\big|(-\lambda_j)^{s}\langle v, e_j\rangle_U\big|^2$. Thus, $D((-A)^{s})$ can be considered as a space of formal sums
\begin{equation*}
v=\sum_{j\in\N} v^{(j)} e_j,\quad\text{ such that }\quad \sum_{j\in\N}\big|(-\lambda_j)^{s}v^{(j)}\big|^2<\infty
\end{equation*}
with coefficients $v^{(j)}\in\mathbb R$.\ Generalizing \eqref{(-A)^s} in the obvious way, we obtain operators $(-A)^s$, $s\in\mathbb R$, which map $D((-A)^{r})$ isometrically onto $D((-A)^{r-s})$ for all $r\in\mathbb R$.

The trace class condition in Assumption~\ref{AssumptionASPDE} can now be reformulated as the requirement that there exists an $\alpha>0$ such that
\begin{equation}\label{eq:trace}
\operatorname{Tr}(-A)^{-\alpha}=\sum_{j\in\mathbb{N}}(-\lambda_j)^{-\alpha}<\infty.
\end{equation}
Note that any linear operator with a spectral decomposition as in \eqref{eq:spectralDecomposition} and eigenvalues as in \eqref{eq:eigenvalues} and \eqref{eq:trace} fulfills Assumption~\ref{AssumptionASPDE}.
Let us consider a prime example of such an operator. 
Throughout this paper, we write $L_2(\domain)$ for the space of quadratically Lebesgue-integrable real-valued functions on a Borel-measurable subset $\domain$ of $\mathbb R^d$.  
Furthermore, $\mathcal{L}(U_1;U_2)$ stands for the space of bounded linear operators between two Hilbert spaces $U_1$ and $U_2$. If the Hilbert spaces coincide, we simply write $\mathcal{L}(U_1)$ instead of $\mathcal{L}(U_1;U_1)$. 
\begin{Example}\label{example:Operator}
Let $\domain$ be a bounded open subset of $\mathbb R^d$, set $\U:=L_2(\domain)$ and let $A:=\Delta_\domain^D$ be the Dirichlet-Laplacian on $\domain$, i.e.,
\begin{equation*}
\Delta_\domain^D : D(\Delta_\domain^D)\subseteq L_2(\domain)\to L_2(\domain)
\end{equation*}
with domain
\begin{equation*}
D(\Delta_\domain^D)=\Big\{u\in H^1_0(\domain) : \Delta u := \sum_{i=1}^d \frac{\partial^2}{\partial x_i^2} u\in L_2(\domain)\Big\},
\end{equation*}
where $H^1_0(\domain)$ stands for the completion in the $L_2(\domain)$-Sobolev space of order one of the set $\mathcal{C}_0^\infty(\domain)$ of infinitely differentiable functions with compact support in $\domain$.
Note that this definition of the domain of the Dirichlet-Laplacian is consistent with the definition of $D((-\Delta_\domain^D)^s)$ for $s=1$ in \eqref{eq:defDomain}, see, e.g., \cite[Remark~1.13]{bib:LindnerPhD} for details.
This linear operator fulfills Assumption~\ref{AssumptionASPDE} for all $\alpha>d/2$: It is well-known that it is densely defined, self-adjoint, and strictly negative definite. 
Furthermore it possesses a compact inverse $(\Delta_\domain^D)^{-1} : L_2(\domain)\to L_2(\domain)$, see, e.g.,\ \cite{bib:Evans}.
Moreover,  Weyl's law states that
\begin{equation*}
-\lambda_j \asymp j^{2/d},\qquad j\in\mathbb N,
\end{equation*}
see \cite{bib:BirSol}, implying that \eqref{eq:trace} holds for all $\alpha>d/2$.
\end{Example}

Next we state the assumptions on the forcing terms $f$ and $B$.

\begin{Assumption}\label{AssumptionfBSPDE}
For certain smoothness parameters
\begin{equation}\label{eq:rsbeta}
\rho\geq0,\qquad \sigma<1\quad\text{ and }\quad \beta <\frac{1-\alpha}2
\end{equation}
($\alpha$ as in Assumption~\ref{AssumptionASPDE}), we have
\begin{align*}
f : D((-A)^\rho)&\to D((-A)^{\rho-\sigma}),\\
B : D((-A)^\rho)&\to \mathcal L(\ell_2; D((-A)^{\rho-\beta})).
\end{align*}
Furthermore, $f$ and $B$ are globally Lip\-schitz continuous, that is, there exist positive constants $C_f^{\text{Lip}}$ and $C_B^{\text{Lip}}$ such that for all $v$, $w\in D((-A)^\rho)$,
\begin{equation*}
%\label{eq:assf}
\begin{aligned}
\lVert f(v)-f(w)\rVert_{D((-A)^{\rho-\sigma})}&\leq C_f^{\text{Lip}}\lVert v-w\rVert_{D((-A)^\rho)},\\
\end{aligned}
\end{equation*}
and
\begin{equation*}
\begin{aligned}
\lVert B(v)-B(w)\rVert_{\mathcal L(\ell_2; D((-A)^{\rho-\beta}))}&\leq C_B^{\text{Lip}}\lVert v-w\rVert_{D((-A)^\rho)}.
\end{aligned}
\end{equation*}
\end{Assumption}

\begin{Remark}\label{rem:AssumptionBSPDE}
\textit{(i)} The parameters $\sigma$ and $\beta$ in Assumption~\ref{AssumptionfBSPDE} are allowed to be negative.

\noindent\textit{(ii)} Assumption~\ref{AssumptionfBSPDE} follows the lines of \cite{bib:Printems2001} (`shifted' by $\rho\geq0$). The linear growth conditions (3.5) and (3.7) therein follow from the (global) Lipschitz continuity of the mappings $f$ and $B$.
\end{Remark}

Finally, we describe the noise and the initial condition in Eq.~\eqref{SPDE}. For the notion of a normal filtration we refer to \cite{bib:PrRoe07}.

\begin{Assumption}\label{Assumptionu0BSPDE}
The noise $W=(W(t))_{t\in[0,T]}$ is a cylindrical Wiener process on $\ell_2$ with respect to a normal filtration $(\mathscr F_t)_{t\in[0,T]}$. The underlying probability space $(\Omega,\mathscr F,\mathbb P)$ is complete. For $\rho$ as in Assumption~\ref{AssumptionfBSPDE}, the initial condition $u_0$ in Eq.~\eqref{SPDE} satisfies
\begin{equation*}
u_0\in L_2(\Omega,\mathscr F_0,\mathbb P;D((-A)^\rho)).
\end{equation*}
\end{Assumption}

In this paper we consider a mild solution concept. To this end let $(e^{tA})_{t\geq0}$ be the strongly continuous semigroup of contractions on $U$ generated by $A$. 

\begin{Definition}
\label{def:mildSolution}
A \emph{mild solution} to Eq.~\eqref{SPDE} (in $D((-A)^\rho)$) is a predictable process $u:\Omega\times[0,T]\to D((-A)^\rho)$ with
\begin{equation}\label{eq:mildSolutionCond}
\sup_{t\in[0,T]}\mathbb E\|u(t)\|_{D((-A)^\rho)}^2 <\infty,
\end{equation}
 such that for every $t\in[0,T]$ the equality
\begin{equation}\label{eq:mildSolution}
u(t)=e^{tA}u_0+\int_0^t e^{(t-s)A}f(u(s))\md s+\int_0^t e^{(t-s)A}B(u(s))\md W(s)
\end{equation}
holds $\mathbb P$-almost surely in $D((-A)^\rho)$.
\end{Definition}

\begin{Remark} 
\textit{(i)}
Let $u:\Omega\times[0,T]\to D((-A)^\rho)$ be a predictable process fulfilling \eqref{eq:mildSolutionCond}. Then, the first integral in \eqref{eq:mildSolution} is meant to be a $D((-A)^\rho)$-valued Bochner integral for $\mathbb P$-almost every $\omega\in\Omega$; the second integral is a $D((-A)^\rho)$-valued stochastic integral as defined, e.g., in \cite{bib:DPZ92,bib:PrRoe07}. Both integrals exist due to \eqref{eq:mildSolutionCond} and Assumptions~\ref{AssumptionASPDE} and \ref{AssumptionfBSPDE}. For example, considering the stochastic integral in \eqref{eq:mildSolution}, we know that it exists as an element of $L_2(\Omega,\mathscr F_t,\mathbb P;D((-A)^\rho))$ if the integral
\begin{equation}
\label{eq:stochIntegral}
\int_0^t\mathbb E\big\|e^{(t-s)A}B(u(s))\big\|_{\hs(\ell_2;D((-A)^\rho))}^2\md s
\end{equation}
is finite, where $\hs(\ell_2;D((-A)^\rho))$ denotes the space of Hilbert-Schmidt operators from $\ell_2$ to $D((-A)^\rho)$. The integrand in \eqref{eq:stochIntegral} can be estimated from above by
\begin{equation*}
\operatorname{Tr}(-A)^{-\alpha}\big\|(-A)^{\beta+\alpha/2} e^{(t-s)A}\big\|_{\mathcal L(D((-A)^\rho))}^2\mathbb E\big\|(-A)^{-\beta}B(u(s))\big\|_{\mathcal L(\ell_2;D((-A)^\rho))}^2,
\end{equation*}
and we have
\begin{equation*}
\big\|(-A)^{\beta+\alpha/2} e^{(t-s)A}\big\|_{\mathcal L(D((-A)^\rho))}^2\leq C (t-s)^{-(2\beta+\alpha)}
\end{equation*}
for $2\beta+\alpha\in[0,1)$. For $2\beta+\alpha<0$ we have $(-\lambda_j)^{2\beta+\alpha}\leq(-\lambda_1)^{2\beta+\alpha}$ for all $j\in\N_0$, which yields
\begin{equation*}
\big\|(-A)^{\beta+\alpha/2} e^{(t-s)A}\big\|_{\mathcal L(D((-A)^\rho))}^2\leq C.
\end{equation*}
Moreover, by the global Lipschitz continuity of the mapping $B:D((-A)^\rho)\to\mathcal L(\ell_2;D((-A)^{\rho-\beta}))$,
\begin{equation*}
\mathbb E\big\|(-A)^{-\beta}B(u(s))\big\|_{\mathcal L(\ell_2;D((-A)^\rho))}^2\leq C\Big(1+\sup_{r\in[0,T]}\mathbb E\|u(r)\|_{D((-A)^\rho)}^2\Big).
\end{equation*}
Thus, the stochastic integral in \eqref{eq:mildSolution} is well-defined.

\noindent\textit{(ii)} 
For the case $\rho=0$ existence and uniqueness of a mild solution to Eq.~\eqref{SPDE} has been stated in \cite[Proposition~3.1]{bib:Printems2001}. The proof consists of a modification of the proof of Theorem~7.4 in \cite{bib:DPZ92}---a contraction argument in $L_\infty([0,T];L_2(\Omega;\U))$.
For the general case $\rho\geq0$ existence and uniqueness can be proved analogously, see \cite[Theorem~5.1]{bib:JeKl11}.
Alternatively, the case $\rho > 0$ can be traced back to the case $\rho=0$ as described in the proof of Proposition~\ref{prop:SPDEexistence} below.
\end{Remark}

\begin{Proposition}\label{prop:SPDEexistence}
Let Assumptions~\ref{AssumptionASPDE}, \ref{AssumptionfBSPDE} and \ref{Assumptionu0BSPDE} be fulfilled.
Then, Eq.~\eqref{SPDE} has a unique (up to modifications) mild solution in $D((-A)^{\rho})$.
\end{Proposition}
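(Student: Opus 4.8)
\emph{Proof proposal.} The plan is to reduce the general case $\rho\ge 0$ to the case $\rho=0$, which is contained in \cite[Proposition~3.1]{bib:Printems2001}, by exploiting that $(-A)^{\rho}$ maps $D((-A)^{r})$ isometrically onto $D((-A)^{r-\rho})$ for every $r\in\R$ and commutes with $e^{tA}$. (Alternatively, one could run a direct Banach fixed point argument for the mild-solution map in $L_\infty([0,T];L_2(\Omega;D((-A)^{\rho})))$ equipped with a weighted sup-norm, using $\lVert e^{tA}\rVert_{\mathcal L(D((-A)^{\rho}))}\le 1$, $\lVert (-A)^{\sigma}e^{tA}\rVert\lesssim t^{-\sigma_+}$, and the factorization through the trace-class operator $(-A)^{-\alpha}$ as in the Remark above, exactly along the lines of \cite{bib:DPZ92,bib:JeKl11}; but the reduction is shorter.)

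First I would introduce the shifted data $\tilde u_0:=(-A)^{\rho}u_0$, $\tilde f:=(-A)^{\rho}\circ f\circ(-A)^{-\rho}$ and $\tilde B:=(-A)^{\rho}\circ B\circ(-A)^{-\rho}$. By Assumption~\ref{Assumptionu0BSPDE}, $\tilde u_0\in L_2(\Omega,\mathscr F_0,\mathbb P;U)$. Since $(-A)^{\pm\rho}$ are isometries between the relevant domain spaces, Assumption~\ref{AssumptionfBSPDE} gives
\[
\tilde f:U\to D((-A)^{-\sigma}),\qquad \tilde B:U\to\mathcal L(\ell_2;D((-A)^{-\beta})),
\]
and $\tilde f$, $\tilde B$ are globally Lipschitz with the same constants $C_f^{\text{Lip}}$, $C_B^{\text{Lip}}$. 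The smoothness parameters still satisfy $\sigma<1$ and $\beta<(1-\alpha)/2$, so the data $(A,\tilde f,\tilde B,\tilde u_0)$ meet the hypotheses of \cite[Proposition~3.1]{bib:Printems2001} with $\rho=0$; hence the shifted equation $\md\tilde u(t)=(A\tilde u(t)+\tilde f(\tilde u(t)))\md t+\tilde B(\tilde u(t))\md W(t)$, $\tilde u(0)=\tilde u_0$, has a unique (up to modifications) mild solution $\tilde u$ in $U=D((-A)^{0})$.

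Next I would transfer the solution. Put $u:=(-A)^{-\rho}\tilde u$. Since $(-A)^{-\rho}\in\mathcal L(U;D((-A)^{\rho}))$, the process $u$ is predictable, $D((-A)^{\rho})$-valued, and $\sup_{t\in[0,T]}\mathbb E\lVert u(t)\rVert_{D((-A)^{\rho})}^2=\sup_{t\in[0,T]}\mathbb E\lVert\tilde u(t)\rVert_U^2<\infty$, so \eqref{eq:mildSolutionCond} holds. Applying $(-A)^{-\rho}$ to the mild equation for $\tilde u$, using that it commutes with $e^{tA}$ and can be pulled inside the Bochner and stochastic integrals (the latter by the standard fact that a bounded operator commutes with the stochastic integral, the integrals being well defined exactly as in Remark~\textit{(i)}), together with $(-A)^{-\rho}\tilde f(\tilde u(s))=f(u(s))$ and $(-A)^{-\rho}\tilde B(\tilde u(s))=B(u(s))$, yields precisely \eqref{eq:mildSolution} for $u$ in $D((-A)^{\rho})$. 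Uniqueness runs backwards: if $u$ is any mild solution in $D((-A)^{\rho})$, then $\tilde u:=(-A)^{\rho}u$ is a mild solution of the shifted equation in $U$ (now applying the bounded operator $(-A)^{\rho}$), and these are unique up to modifications, whence so is $u=(-A)^{-\rho}\tilde u$.

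The only genuinely delicate point is the commutation of $(-A)^{\pm\rho}$ with the stochastic integral in \eqref{eq:mildSolution}; this is routine once one knows that the integrand $e^{(t-s)A}B(u(s))$ and its images under $(-A)^{\pm\rho}$ are stochastically integrable, which follows from the factorization through the trace-class operator $(-A)^{-\alpha}$ together with $\lVert(-A)^{\beta+\alpha/2}e^{tA}\rVert_{\mathcal L(D((-A)^{r}))}^2\lesssim t^{-(2\beta+\alpha)}$ for $2\beta+\alpha<1$, exactly as carried out in the Remark preceding the proposition. Everything else is bookkeeping about isometries and predictability.
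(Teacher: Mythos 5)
Your proposal is correct, but it takes a slightly different route than the paper. The paper does not conjugate the data: it keeps $f$, $B$, $u_0$ unchanged and instead considers the part $\hat A$ of $A$ in the Hilbert space $\hat U:=D((-A)^\rho)$ (with $D(\hat A)=D((-A)^{\rho+1})$), observes that Assumptions~\ref{AssumptionASPDE}--\ref{Assumptionu0BSPDE} then hold with $\hat\rho=0$ relative to the scale $D((-\hat A)^s)=D((-A)^{\rho+s})$, applies \cite[Proposition~3.1]{bib:Printems2001} in $\hat U$, and finally notes that $e^{t\hat A}$ and $e^{tA}$ coincide on $\hat U$, so a mild solution of the shifted equation is literally the same process as a mild solution of Eq.~\eqref{SPDE} — no transfer map and, in particular, no commutation of any operator with Bochner or stochastic integrals is needed. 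Your version instead keeps $A$ on $U$ and conjugates the data by the isometries $(-A)^{\pm\rho}$ (which amounts to the same reduction, since $(-A)^\rho$ is a unitary from $\hat U$ onto $U$ intertwining $\hat A$ with $A$), at the cost of the extra bookkeeping step of pulling $(-A)^{\pm\rho}$ through $e^{tA}$ and through the two integrals in \eqref{eq:mildSolution} in both directions (existence and uniqueness). That step is indeed routine — in each direction the operator being moved is bounded between the relevant spaces, $U\to D((-A)^\rho)$ or $D((-A)^\rho)\to U$, and the integrability needed is exactly what the Remark preceding the proposition provides — so your argument is complete; it is just marginally longer than the paper's, whose restriction trick makes the identification of the two mild solutions immediate. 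Your Lipschitz bookkeeping for $\tilde f$ and $\tilde B$ (same constants, unchanged $\sigma$, $\beta$, $\alpha$) is accurate.
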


\begin{proof} 
If Assumptions~\ref{AssumptionASPDE}, \ref{AssumptionfBSPDE} and \ref{Assumptionu0BSPDE} are fulfilled for $\rho=0$, Eq.~\eqref{SPDE} fits into the setting of \cite{bib:Printems2001} (the Hilbert space $\U$ is denoted by $H$ there). By Proposition~3.1 therein there exists a unique mild solution $u$ to Eq.~\eqref{SPDE}. Now suppose that Assumptions~\ref{AssumptionASPDE}, \ref{AssumptionfBSPDE} and \ref{Assumptionu0BSPDE} hold for some $\rho>0$. Set
\begin{equation*}
\hat U:=D((-A)^\rho),\quad D(\hat A):=D((-A)^{\rho+1})
\end{equation*}
and consider the unbounded operator $\hat A$ on $\hat U$ given by
\begin{equation*}
\hat A:D(\hat A)\subset\hat U\to\hat U,\;v\mapsto \hat Av:=Av.
\end{equation*}
Note that $\hat A$ fulfills Assumption~\ref{AssumptionASPDE} with $A$, $D(A)$ and $U$ replaced by $\hat A$, $D(\hat A)$ and $\hat U$, respectively. Defining the spaces $D((-\hat A)^s)$ analogously to the spaces $D((-A)^s)$, we have $D((-A)^{\rho+s})=D((-\hat A)^{s})$, $s\in\R$, so that Assumptions~\ref{AssumptionfBSPDE} and \ref{Assumptionu0BSPDE} can be reformulated with $\rho$, $D((-A)^\rho)$, $D((-A)^{\rho-\sigma})$ and $D((-A)^{\rho-\beta})$ replaced by $\hat\rho:=0$, $D((-\hat A)^{\hat\rho})$, $D((-\hat A)^{\hat\rho-\sigma})$ and $D((-\hat A)^{\hat\rho-\beta})$, respectively. Thus, the equation
\begin{equation}\label{shiftedSPDE}
\md u(t) = \big(\hat A u(t) + f(u(t))\big) \md t + B(u(t)) \md W(t), \quad u(0) = u_0,
\end{equation}
fits into the setting of \cite{bib:Printems2001} (now $\hat\U$ corresponds to the space $H$ there), so that, by \cite[Proposition~3.1]{bib:Printems2001}, there exists a unique mild solution $u$ to Eq.~\eqref{shiftedSPDE}. Since the operators $e^{tA}\in\mathcal L(\U)$ and $e^{t\hat A}\in\mathcal L(\hat\U)$ coincide on $\hat\U\subset\U$, it is clear that any mild solution to Eq.~\eqref{shiftedSPDE} is a mild solution to Eq.~\eqref{SPDE} and vice versa.
\end{proof}

\begin{Remark} 
If the initial condition $u_0$ belongs to $L_p(\Omega,\mathscr F_0,\mathbb P;D((-A)^\rho))$\linebreak$\subset L_2(\Omega,\mathscr F_0,\mathbb P;D((-A)^\rho))$ for some $p>2$, the solution $u$ even satisfies\linebreak $\sup_{t\in[0,T]}\mathbb E\|u(t)\|_{D((-A)^\rho)}^p <\infty$. This is a consequence of the Burkholder-Davis-Gundy inequality, cf. \cite[Theorem~7.4]{bib:DPZ92} or \cite[Proposition~3.1]{bib:Printems2001}. Analogous improvements are valid for the estimates in
Propositions~\ref{errorEulerSPDE} and \ref{prop:OverallErrorEstimateSPDE} below.
\end{Remark}

We finish this section with concrete examples for stochastic PDEs that fit into our setting. 

\begin{Example}\label{ConcreteExampleSPDE1}
Let $\domain$ be an open and bounded subset of $\R^d$, $U:=L_2(\domain)$, and let $A=\Delta_\domain^D$ be the Dirichlet-Laplacian on $\domain$ as described in Example~\ref{example:Operator}. We consider examples for stochastic PDEs in dimension $d=1$ and $d\geq2$.

First, let $\domain\subset\R^1$ be one-dimensional and consider the problem
\begin{equation}\label{eq:ConcreteExampleSPDE1}
\left.
\begin{alignedat}{2}
\md u(t,x)&=\Delta_xu(t,x)&&\md t + g(u(t,x))\md t + h(u(t,x))\,\md W_1(t,x),\\
&&&(t,x)\in[0,T]\times\domain,\\
u(t,x)&=0, &&(t,x)\in[0,T]\times\partial \domain,\\
u(0,x)&=u_0(x),&&x\in\domain,
\end{alignedat}
\quad\right\}
\end{equation}
where $u_0\in L_2(\domain)$, $g:\R\to\R$ and $h:\R\to\R$ are globally Lipschitz continuous, and $W_1=(W_1(t))_{t\in[0,T]}$ is a Wiener process (with respect to a normal filtration on a complete probability space) whose Cameron--Martin space is some space of functions on $\domain$ that is continuously embedded in $L_\infty(\domain)$, e.g., $W_1$ is a Wiener process with Cameron--Martin space $H^s(\domain)$ for some $s>1/2$. Let $(\psi_k)_{k\in\N}$ be an arbitrary orthonormal basis of the Cameron--Martin space of $W_1$ and define $f$ and $B$ as the Nemytskii type operators
\begin{equation}\label{eq2:ConcreteExampleSPDE1}
\begin{alignedat}{2}
f(v)(x)&:=g(v(x)),&&v\in L_2(\domain),\;x\in\domain,\\
\big(B(v){\bf a}\big)(x)&:=h(v(x))\sum_{k\in\N}a_k\psi_k(x),&\;\;& v\in L_2(\domain),\;{\bf a}=(a_k)_{k\in\N}\in\ell_2,\;x\in\domain.
\end{alignedat}
\end{equation}
Then, Eq.~\eqref{SPDE} is an abstract version of problem \eqref{eq:ConcreteExampleSPDE1}, and the mappings $f$ and $B$ are globally Lipschitz continuous (and thus linearly growing) from $D((-A)^0)=L_2(\domain)$ to $L_2(\domain)$ and from $D((-A)^0)$ to $\mathcal L(\ell_2;L_2(\domain))$, respectively. For $B$ this follows from the estimate
\begin{align*}
\|B(v_1){\bf a}-B(v_2){\bf a}\|_{L_2(\domain)}
&=\Big\|\big(h(v_1)-h(v_2)\big)\sum_{k\in\N}a_k\psi_k\Big\|_{L_2(\domain)}\\
&\leq\|h(v_1)-h(v_2)\|_{L_2(\domain)}\Big\|\sum_{k\in\N}a_k\psi_k\Big\|_{L_\infty(\domain)}\\
&\leq C\|v_1-v_2\|_{L_2(\domain)}\|{\bf a}\|_{\ell_2},
\end{align*}
where the last step is due to the Lipschitz property of $h$ and the assumption that the Cameron--Martin space of $W_1$ is continuously embedded in $L_\infty(\domain)$.
It follows that Assumptions~\ref{AssumptionASPDE}, \ref{AssumptionfBSPDE} and \ref{Assumptionu0BSPDE} are fulfilled for $1/2<\alpha<1$ (compare Example \ref{example:Operator}) and $\rho=\sigma=\beta=0$.

Now let $\domain\subset\R^d$ be $d$-dimensional, $d\geq2$,
and consider the problem \eqref{eq:ConcreteExampleSPDE1} where $u_0\in L_2(\domain)$, $g:\R\to\R$ is globally Lipschitz continuous, $h:\R\to\R$ is constant (additive noise), and $W_1=(W_1(t))_{t\in[0,T]}$ is a Wiener process whose Cameron--Martin space is some space of functions on $\domain$ that is  continuously embedded in
$D((-A)^{-\beta})$ for some $\beta<1/2-d/4$.
One easily sees that the mappings $f$ and $B$, defined as in  \eqref{eq2:ConcreteExampleSPDE1}, are globally Lipschitz continuous (and thus linearly growing) from $D((-A)^0)=L_2(\domain)$ to $L_2(\domain)$ and from $D((-A)^0)$ to $\mathcal L(\ell_2;D((-A)^{-\beta}))$, respectively.
It follows that Assumptions~\ref{AssumptionASPDE}, \ref{AssumptionfBSPDE} and \ref{Assumptionu0BSPDE} are fulfilled for
$\beta<1/2-d/4$, $d/2<\alpha<1-2\beta$, and $\rho=\sigma=0$. Alternatively, we could assume $h$ to be
sufficiently smooth
and replace $h(u(t,x))$ in problem \eqref{eq:ConcreteExampleSPDE1} by, e.g.,
$h\big(\int_\domain k(x,y)u(t,y)\md y\big)$ with a sufficiently smooth kernel $k:\domain\times\domain\to\R$.
\end{Example}

\begin{Example}\label{ConcreteExampleSPDE2}
As in Examples \ref{example:Operator} and \ref{ConcreteExampleSPDE1}, let $A=\Delta_\domain^D$ be the Dirichlet-Laplacian on an open and bounded domain $\domain\subset\R^d$. From the numerical point of view, we are especially interested in stochastic PDEs of type \eqref{SPDE} with $\rho=1/2$. In this case the solution process takes values in the space $D((-A)^{1/2})=H^1_0(\domain)$, and, as we will see later in Proposition~\ref{errorEulerSPDE} and Theorem~\ref{thm:central-theorem-in-stochastic-section}, we obtain estimates for the approximation error in terms of the energy norm
\begin{equation*}
\|v\|_{D((-\Delta_\domain^D)^{1/2})}=\langle\nabla v,\nabla v\rangle_{L_2(\domain)}^{1/2},\qquad v\in H^1_0(\domain).
\end{equation*}
The energy norm is crucial because error estimates for numerical solvers of elliptic problems (which we want to apply in each time step) are usually expressed in terms of this norm,
compare~\cite[Section~4]{bib:CDDFKLRRS2014}, where adaptive wavelet solvers with optimal convergence rates are considered.

First, let $\domain\subset\R^1$ be one-dimensional, and consider the problem
\eqref{eq:ConcreteExampleSPDE1} where $u_0\in H^1_0(\domain)$, $g:\R\to\R$ is globally Lipschitz continuous, $h:\R\to\R$ is linear or constant, and $W_1=(W_1(t))_{t\in[0,T]}$ is a Wiener process whose Cameron--Martin space is some space of functions on $\domain$ that is continuously embedded in $D((-A)^{1/2-\beta})$ for some nonnegative  $\beta<1/4$, so that $W_1$ takes values in a bigger Hilbert space, say, in $D((-A)^{-1/4})$. (The embedding $D((-A)^{1/2-\beta})\hookrightarrow D((-A)^{-1/4})$ is Hilbert--Schmidt since \eqref{eq:trace} is fulfilled for $\alpha>1/2$, compare Example \ref{example:Operator}.)
Take an arbitrary orthonormal basis $(\psi_k)_{k\in\N}$ of the Cameron--Martin space of $W_1$, and define $f(v)$ and $B(v)$ for $v\in H^1_0(\domain)$ analogously to \eqref{eq2:ConcreteExampleSPDE1}.
Then, Eq.~\eqref{SPDE} is an abstract version of problem \eqref{eq:ConcreteExampleSPDE1}, and the mappings $f$ and $B$ are globally Lipschitz continuous (and thus linearly growing) from $D((-A)^{1/2})=H^1_0(\domain)$ to $D((-A)^0)=L_2(\domain)$ and from $D((-A)^{1/2})$ to $\mathcal L(\ell_2;D((-A)^{1/2-\beta}))$, respectively. The mapping properties of $B$ follow from the fact that the Cameron--Martin space of $W_1$ is continuously embedded in $D((-A)^{1/2-\beta})$ and the inequality \[\|vw\|_{D((-A)^{1/2-\beta})}\leq C\|v\|_{H^1_0(\domain)}\|w\|_{{D((-A)^{1/2-\beta})}}.\] 
The latter is due to the inequalities $\|vw\|_{L_2(\domain)}\leq \|v\|_{H^1_0(\domain)}\|w\|_{L_2(\domain)}$ together with $\|vw\|_{H^1_0(\domain)}\leq C \|v\|_{H^1_0(\domain)}\|w\|_{H^1_0(\domain)}$ (a consequence of the Sobolev embedding $H^1(\domain)\hookrightarrow L_\infty(\domain)$ in dimension $1$) and interpolation since $$D((-A)^{1/2-\beta})=[L_2(\domain),D((-A)^{1/2})]_{1-2\beta}.$$ Thus, Assumptions~\ref{AssumptionASPDE}, \ref{AssumptionfBSPDE} and \ref{Assumptionu0BSPDE} are fulfilled for $\rho=\sigma=1/2$, $0\leq\beta <1/4$ and $1/2<\alpha<1-2\beta$.

Now let $\domain\subset\R^d$ be $d$-dimensional and consider problem \eqref{eq:ConcreteExampleSPDE1} where $u_0\in H^1_0(\domain)$, $g:\R\to\R$ is globally Lipschitz continuous, $h:\R\to\R$ is constant, and $W_1=(W_1(t))_{t\in[0,T]}$ is a Wiener process whose Cameron--Martin space is continuously embedded in $D((-A)^{1/2-\beta})$ for some $\beta<1/2-d/4$. Then, the mappings  $f$ and $B$, defined analogously to the one dimensional case, are globally Lipschitz continuous (and thus linearly growing) from $D((-A)^{1/2})=H^1_0(\domain)$ to $D((-A)^0)=L_2(\domain)$ and from $D((-A)^{1/2})$ to $\mathcal L(\ell_2;D((-A)^{1/2-\beta}))$ respectively. It follows that Assumptions~\ref{AssumptionASPDE}, \ref{AssumptionfBSPDE} and \ref{Assumptionu0BSPDE} are fulfilled for $\rho=\sigma=1/2$, $\beta<1/2-d/4$  and $1<\alpha<1-2\beta$. As in Example~\ref{ConcreteExampleSPDE1} we could alternatively assume $h:\R\to\R$ to be sufficiently smooth and replace $h(u(t,x))$ in problem \eqref{eq:ConcreteExampleSPDE1} by, e.g., 
$h\big(\int_\domain k(x,y)u(t,y)\md y\big)$ with a sufficiently smooth kernel $k:\domain\times\domain\to\R$.
\end{Example}

\begin{Remark}\label{rem:Setting}
The reader familiar with SPDEs of the form \eqref{SPDE} might wonder about the rather restrictive conditions in the examples above, especially on the noise terms therein.
These restrictions are due to the fact that we basically adopt the setting from \cite{bib:Printems2001}.
This allows us to focus on our main goal, i.e., the analysis of the error of the inexact counterpart of the Euler scheme~\eqref{eq:EulerSPDE}, without spending too much time on explaining details regarding the underlying setting.
However, it is worth mentioning that much more general equations of the type~\eqref{SPDE} have been considered in the literature, see, e.g., the recent results concerning the maximal $L_p$-regularity of SPDEs in \cite{bib:NeeVerWei2012}. 
Also, the convergence of the linearly implicit Euler scheme has been considered under weaker assumptions, see, e.g., \cite{bib:Cox2012, bib:CVN2012}.
\end{Remark}

\section{Exact Euler scheme}
\label{sec:semi-discretization-in-time}

In this section we use the linearly implicit Euler scheme to obtain a semidiscretization of Eq.~\eqref{SPDE} in time.
We present a corresponding convergence result as a slight modification of \cite[Theorem~3.2]{bib:Printems2001}.
Since no spatial discretization is involved, we speak of the \emph{exact} Euler scheme in contrast to the inexact perturbed scheme considered in the forthcoming section.
From now on, let Assumptions~\ref{AssumptionASPDE}, \ref{AssumptionfBSPDE} and \ref{Assumptionu0BSPDE} be fulfilled.

For $K\in\mathbb N$ and $\tau:=T/K$ we consider discretizations $(u_k)_{k=0}^K$ given by the linearly implicit Euler scheme \eqref{eq:EulerSPDE}, i.e.,
\begin{equation*}
u_{k+1}:=(I-\tau A)^{-1}\big(u_k+\tau f(u_k)+\sqrt\tau B(u_k)\chi_{k}\big), \qquad
k=0,\ldots,K-1.
\end{equation*}
We use the abbreviation
\begin{equation*}
\chi_{k}:=\chi_{k}^K:=\frac 1{\sqrt\tau}\left(W\big(t_{k+1}^K\big)-W\big(t_k^K\big)\right)
\end{equation*}
with 
\begin{equation*}
t_k:=\tau k,\quad k=0,\ldots,K.
\end{equation*}
Note that each $\chi_{k}$, $k=0,\ldots,K-1$, is an $\mathscr F_{t_{k+1}}$-measurable Gaussian white noise on $\ell_2$, i.e., a linear isometry from $\ell_2$ to $L_2(\Omega,\mathscr F_{t_{k+1}},\mathbb P)$ such that for each $\mathbf{a}\in\ell_2$ the real valued random variable $\chi_k(\mathbf{a})$ is centered Gaussian with variance $\|\mathbf a\|_{\ell_2}^2$. 
Moreover, for each $k=0,\ldots,K-1$, the sub-$\sigma$-field of $\mathscr F$ generated by $\{\chi_k(\mathbf a):\mathbf a\in\ell_2\}$ is independent of $\mathscr F_{t_{k}}$.

We explain in which way the scheme \eqref{eq:EulerSPDE} has to be understood.
Let $G$ be a separable real Hilbert space such that $D((-A)^{\rho-\beta})$ is embedded into $G$ via a Hilbert--Schmidt embedding.  Then, for all $k=0,\ldots,K-1$ and for all $\mathscr F_{t_{k}}$-measurable, $D((-A)^\rho)$-valued,  square integrable random variables $v\in L_2(\Omega,\mathscr F_{t_{k}},\mathbb P;D((-A)^\rho))$, the term $B(v)\chi_{k}$ can be interpreted as an $\mathscr F_{t_{k+1}}$-measurable, square integrable, $G$-valued random variable in the sense
\begin{equation}\label{eq:noiseExpansion}
B(v)\chi_{k}:=L_2(\Omega,\mathscr F_{t_{k+1}},\mathbb P;G)\text{-}\lim_{J\to\infty}\sum_{j=1}^J\chi_{k}(\mathbf b_j)B(v)\mathbf b_j,
\end{equation}
where $(\mathbf b_j)_{j\in\mathbb N}$ is an orthonormal basis of $\ell_2$.  
This definition does not depend on the specific choice of the orthonormal basis $(\mathbf b_j)_{j\in\mathbb N}$. Note that the stochastic independence of  $\{\chi_{k}(\mathbf a):\mathbf a\in\ell_2\}$ and $\mathscr F_{t_k}$ is important at this point. We have
\begin{equation}\label{eq:isometry-wn}
 \mathbb{E} \lVert B(v)\chi_{k}\rVert_G^2 = \mathbb E\lVert B(v)\rVert_{\hs(\ell_2;G)}^2,
\end{equation}
the last term being finite due to the Lipschitz continuity of $B$ by Assumption~\ref{AssumptionfBSPDE} (see also Remark~\ref{rem:AssumptionBSPDE}) and the fact that the embedding \begin{equation*}D((-A)^{\rho-\beta})\hookrightarrow G\end{equation*} is Hilbert--Schmidt. 
Let us explicitly choose the space $G$ in such a way that the terms $u_k+\tau f(u_k)+ \sqrt{\tau}B(u_k)\chi_k$ on the right hand side of \eqref{eq:EulerSPDE} can be considered as a $G$-valued random variable and the application of $(I-\tau A)^{-1}$ to elements of $G$ makes sense.
Our choice of $G$, which we keep throughout this paper, is
\begin{equation}\label{eq:G}
G:=D((-A)^{\rho-\max(0,\sigma,\beta+\alpha/2)}).
\end{equation}
Note that the condition $\text{Tr}(-A)^{-\alpha}<\infty$ in Assumption~\ref{AssumptionASPDE} yields that the embedding $D((-A)^{\rho-\beta})\hookrightarrow D((-A)^{\rho-\beta-\alpha/2})$ is Hilbert--Schmidt, and the embedding $D((-A)^{\rho-\beta-\alpha/2})\hookrightarrow
D((-A)^{\rho-\max(0,\sigma,\beta+\alpha/2)})$
is clearly continuous.
Thus, we have indeed a Hilbert--Schmidt embedding $D((-A)^{\rho-\beta})\hookrightarrow G$.
For all $k=0,\ldots,K-1$ and $v\in L_2(\Omega,\mathscr F_{t_k},\mathbb P;D((-A)^\rho))$ we consider the term $B(v)\chi_{k}$ as an element in the space
\begin{equation*}
L_2(\Omega,\mathscr F_{t_{k+1}},\mathbb P;G)=L_2\big(\Omega,\mathscr F_{t_{k+1}},\mathbb P;D((-A)^{\rho-\max(0,\sigma,\beta+\alpha/2)})\big).
\end{equation*}
Next, due to the Lipschitz continuity of $f$ by 
Assumption~\ref{AssumptionfBSPDE} (see also Remark~\ref{rem:AssumptionBSPDE}), we also know that for all $v\in L_2(\Omega,\mathscr F_{t_k},\mathbb P;D((-A)^\rho))$ the term $f(v)$ is an element in $L_2(\Omega,\mathscr F_{t_k},\mathbb P;G)$.
Finally, as a consequence of Lemma~\ref{lem:estimatesResolvent} below and the fact that $\max(0,\sigma,\beta+\alpha/2)<\max(\sigma,1/2)<1$ due to \eqref{eq:rsbeta}, the operator
\begin{equation*}
(I-\tau A)^{-1} : G\to D((-A)^\rho) 
\end{equation*}
is continuous. It follows that the discretizations $(u_k)_{k=0}^K$ are uniquely determined by \eqref{eq:EulerSPDE} and for all $k=0,\ldots,K$ we have
\begin{equation*} 
u_k\in L_2(\Omega,\mathscr F_{t_k},\mathbb P; D((-A)^\rho)).
\end{equation*}

Now we prove the lemma we just used to show the boundedness of the resolvents  $(I-\tau A)^{-1}$ of $A$ in the right spaces. It will also be employed to prove Proposition~\ref{prop:LipschitzBoundSPDE} in the next section.
\begin{Lemma}\label{lem:estimatesResolvent}
Let $\tau>0$ and $r\in\mathbb{R}$. The operator $I-\tau A$ is a homeomorphism from $D((-A)^r)$ to $D((-A)^{r-1})$. For $n\in\mathbb{N}$ we have the following operator norm estimates for $(I-\tau A)^{-n}$, considered as an operator from $D((-A)^{r-s})$ to $D((-A)^r)$, $s\leq1$:
\begin{equation*}
\lVert(I-\tau A)^{-n}\rVert_{\mathcal L(D((-A)^{r-s}),D((-A)^r))}\leq
\begin{cases}
\displaystyle s^s\Big(1-\frac{s}{n}\Big)^{(n-s)} (n\tau)^{-s} &\!\!\! \mbox{, } 0<s\leq1\\[.9em]
(-\lambda_1)^{s}(1-\tau\lambda_1)^{-n} &\!\!\! \mbox{, } s\leq 0.
\end{cases}
\end{equation*}
\end{Lemma}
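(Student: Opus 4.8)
The plan is to exploit the diagonal structure of $A$ with respect to the orthonormal eigenbasis $(e_j)_{j\in\mathbb N}$. Since $(I-\tau A)e_j=(1-\tau\lambda_j)e_j$ with $1-\tau\lambda_j=1+\tau(-\lambda_j)\geq 1+\tau(-\lambda_1)>1>0$ (here $\lambda_j<0$ is used), the operator $I-\tau A$ acts on a formal sum $v=\sum_j v^{(j)}e_j$ coordinatewise by multiplication with $1-\tau\lambda_j$, and likewise $(I-\tau A)^{-n}$ by multiplication with $(1-\tau\lambda_j)^{-n}$. Expressing the norms of $D((-A)^r)$ and $D((-A)^{r-s})$ as weighted $\ell_2$-norms of the coefficient sequences, with weights $(-\lambda_j)^{2r}$ and $(-\lambda_j)^{2(r-s)}$ respectively (this holds for all $r\in\mathbb R$, including the negative-exponent completion spaces, by density), the operator norm in question is immediately seen to equal
\[
\bigl\lVert(I-\tau A)^{-n}\bigr\rVert_{\mathcal L(D((-A)^{r-s}),D((-A)^r))}
=\sup_{j\in\mathbb N}\ (-\lambda_j)^{s}\,(1-\tau\lambda_j)^{-n}.
\]
Applying the same coordinatewise reasoning to $I-\tau A$ and $(I-\tau A)^{-1}$ between $D((-A)^r)$ and $D((-A)^{r-1})$, one gets the homeomorphism statement, since the multiplier sequences $(1-\tau\lambda_j)(-\lambda_j)^{-1}$ and $(1-\tau\lambda_j)^{-1}(-\lambda_j)$ are bounded (they converge to $\tau$ and $\tau^{-1}$ and are controlled by $\lambda_j\leq\lambda_1<0$).

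Next I would reduce the estimation of this supremum to a one-variable optimization: put $x:=-\lambda_j>0$ and study $\phi(x):=x^{s}(1+\tau x)^{-n}$ on $(0,\infty)$; because $\{-\lambda_j:j\in\mathbb N\}\subset(0,\infty)$, any upper bound for $\sup_{x>0}\phi(x)$ dominates $\sup_j\phi(-\lambda_j)$. For $s\leq 0$ both factors $x^{s}$ and $(1+\tau x)^{-n}$ are non-increasing, hence $\phi$ is non-increasing and $\sup_j\phi(-\lambda_j)=\phi(-\lambda_1)=(-\lambda_1)^{s}(1-\tau\lambda_1)^{-n}$, which is the second line of the claim. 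For $0<s\leq 1$ I would take logarithms, $\ln\phi(x)=s\ln x-n\ln(1+\tau x)$, differentiate, and locate the unique critical point $x^{\ast}=s/(\tau(n-s))$ (note $n-s\geq 0$ since $s\leq 1\leq n$); inspecting the sign of $(\ln\phi)'$ (it is $+\infty$ near $0$ and negative for large $x$, with only one zero) shows $x^{\ast}$ is the global maximum. Since $1+\tau x^{\ast}=n/(n-s)$, a short computation gives
\[
\phi(x^{\ast})=\frac{s^{s}(n-s)^{n-s}}{n^{n}\,\tau^{s}}
=s^{s}\Bigl(1-\frac{s}{n}\Bigr)^{n-s}(n\tau)^{-s},
\]
which is the first line.

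The routine one-variable maximization is straightforward; the only points requiring a word of care are the edge cases. If $s=1=n$ then $n-s=0$ and $x^{\ast}=\infty$: $\phi$ is strictly increasing with $\sup_{x>0}\phi=\lim_{x\to\infty}\phi=\tau^{-1}$, the supremum is not attained, but the bound still holds with the convention $0^{0}=1$. One must also recall that for negative exponents $D((-A)^{r})$ is the completion introduced in Section~\ref{sec:setting-and-assumptions}, on which the diagonal action and the weighted-norm formula remain valid by density, so no separate argument is needed there. I do not expect any genuine obstacle here; the main content is the elementary calculus optimization, and the rest is bookkeeping of the eigenvalue weights and the boundary values of the parameters.
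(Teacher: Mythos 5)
Your proposal is correct and follows essentially the same route as the paper: diagonalize with respect to the eigenbasis, identify the operator norm with $\sup_j(-\lambda_j)^s(1-\tau\lambda_j)^{-n}$ via the weighted-$\ell_2$ (Parseval) description of the spaces $D((-A)^r)$, and bound this by the one-variable supremum $\sup_{x>0}x^s(1+\tau x)^{-n}$, monotone for $s\leq0$ and maximized at $x^\ast=s/(\tau(n-s))$ for $0<s\leq1$. The only differences are cosmetic: you carry out the calculus maximization and the multiplier argument for the homeomorphism explicitly, whereas the paper states the supremum formula directly and invokes the bounded inverse theorem.
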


\begin{proof}
The bijectivity of $I-\tau A:D((-A)^r)\to D((-A)^{r-1})$ is almost obvious. Its proof is left to the reader. The bicontinuity follows from the continuity of the inverse as shown below (case $s=1$) and the bounded inverse theorem.
Concerning the operator norm estimates,
we use Parseval's identity and the spectral properties of $A$
to obtain
\begin{align*}
\sup_{\lVert v\rVert_{D((-A)^{r-s})}=1}&\lVert(I-\tau A)^{-n}v\rVert_{D((-A)^{r})}^2\\
&=\sup_{\lVert w\rVert_U=1}\lVert(I-\tau A)^{-n}(-A)^{s-r}w\rVert_{D((-A)^{r})}^2\\
&=\sup_{\lVert w\rVert_U=1}\lVert(-A)^r (I-\tau
A)^{-n}(-A)^{s-r}w\rVert_{U}^2\\
&=\sup_{\lVert w\rVert_U=1}\sum_{k\in\mathbb N}\left|(-\lambda_k)^{s}(1-\tau\lambda_k)^{-n}
\big\langle w,e_k\big\rangle_{U}\right|^2.
\end{align*}
If $s\leq 0$, the last expression is 
equal to $(-\lambda_1)^{2s}(1-\tau\lambda_1)^{-2n}$. If $0<s\leq1$, an upper bound is
given by the square of
\begin{equation*}
\sup_{x>0}\frac{x^s}{(1+\tau x)^n}=s^s\Big(1-\frac{s}{n}\Big)^{(n-s)} (n\tau)^{-s}.\qedhere
\end{equation*}
\end{proof}

After these preparations we present an extension of the error estimate in~\cite{bib:Printems2001}.
\begin{Proposition}\label{errorEulerSPDE}
Let Assumptions~\ref{AssumptionASPDE}, \ref{AssumptionfBSPDE} and \ref{Assumptionu0BSPDE} be fulfilled. Let $(u_k)_{k=0}^{K}$ be the time discretization of the mild solution $(u(t))_{t\in[0,T]}$ in $D((-A)^\rho)$ to Eq.~\eqref{SPDE}, given by the scheme \eqref{eq:EulerSPDE}. Then, for every
\begin{equation*}
\delta<\min(1-\sigma,(1-\alpha)/2-\beta),
\end{equation*}
we have for all $1\leq k\leq K$
\begin{equation*}
\Big(\mathbb E\lVert u(t_k)-u_k\rVert_{D((-A)^\rho)}^2\Big)^{1/2}\leq C \left(\tau^\delta + \frac 1k\Big(\mathbb E\lVert u_0\lVert_{D((-A)^\rho)}^2\Big)^{1/2}\right),
\end{equation*}
where the constant $C>0$ depends only on $\delta$, $A$, $B$, $f$, $\alpha$, $\beta$, $\sigma$ and $T$.
\end{Proposition}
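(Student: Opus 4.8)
The plan is to follow the strategy of \cite[Theorem~3.2]{bib:Printems2001}, carefully tracking the dependence on the shift parameter $\rho$, and to exploit the reduction device already used in the proof of Proposition~\ref{prop:SPDEexistence}. Since Assumptions~\ref{AssumptionASPDE}, \ref{AssumptionfBSPDE} and \ref{Assumptionu0BSPDE} for a given $\rho\geq0$ are equivalent to the same assumptions for the operator $\hat A$ on $\hat U:=D((-A)^\rho)$ with $\hat\rho=0$, and since the Euler scheme \eqref{eq:EulerSPDE} transforms covariantly under this identification (the resolvents $(I-\tau A)^{-1}$ and $(I-\tau\hat A)^{-1}$ agree on $\hat U$), the estimate in $D((-A)^\rho)=D((-\hat A)^0)$ for Eq.~\eqref{SPDE} is literally the $\rho=0$ estimate for Eq.~\eqref{shiftedSPDE}. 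Thus it suffices to prove the statement for $\rho=0$, which is exactly the setting of \cite{bib:Printems2001}, with the (minor) additional bookkeeping that we want the constant $C$ to depend only on the listed quantities. I would first spell out this reduction in one short paragraph, so that the rest of the proof can be carried out with $\rho=0$ and $\|\cdot\|_U$-norms.

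For the case $\rho=0$, the key is the mild-solution representation \eqref{eq:mildSolution} together with the discrete variation-of-constants formula for \eqref{eq:EulerSPDE}: iterating $u_{k+1}=(I-\tau A)^{-1}(u_k+\tau f(u_k)+\sqrt\tau B(u_k)\chi_k)$ gives
\[
u_k=(I-\tau A)^{-k}u_0+\tau\sum_{j=0}^{k-1}(I-\tau A)^{-(k-j)}f(u_j)+\sum_{j=0}^{k-1}(I-\tau A)^{-(k-j)}B(u_j)\,\Delta W_j,
\]
with $\Delta W_j:=\sqrt\tau\,\chi_j=W(t_{j+1})-W(t_j)$. Comparing this term by term with \eqref{eq:mildSolution} evaluated at $t=t_k$, one writes $u(t_k)-u_k$ as a sum of: (i) the semigroup-versus-resolvent error on the initial datum, $(e^{t_kA}-(I-\tau A)^{-k})u_0$; (ii) a deterministic-drift error, split into the difference of kernels $\int_0^{t_k}e^{(t_k-s)A}f(u(s))\md s-\tau\sum_j(I-\tau A)^{-(k-j)}f(u(t_j))$ plus a Lipschitz term $\tau\sum_j(I-\tau A)^{-(k-j)}(f(u(t_j))-f(u_j))$; and (iii) the analogous stochastic-drift error, using It\^o's isometry to pass to an $L_2(\Omega)$-estimate of the Hilbert--Schmidt norms. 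The smoothing estimates of Lemma~\ref{lem:estimatesResolvent} (with $r=0$, $0<s\leq1$, giving $\|(I-\tau A)^{-n}\|_{\mathcal L(D((-A)^{-s}),U)}\lesssim (n\tau)^{-s}$) are what control the kernel differences: $\|(e^{t_kA}-(I-\tau A)^{-k})(-A)^{-\delta}\|_{\mathcal L(U)}\lesssim \tau^\delta$ for $\delta\in(0,1)$ follows from a standard spectral estimate $|e^{-t_k|\lambda_j|}-(1+\tau|\lambda_j|)^{-k}|\lesssim (\tau|\lambda_j|)^\delta(\dots)$, and likewise for the integral-versus-sum terms one uses both that $f$ maps into $D((-A)^{-\sigma})$, $B$ into $\mathcal L(\ell_2;D((-A)^{-\beta}))$, and the trace-class bound $\operatorname{Tr}(-A)^{-\alpha}<\infty$ to absorb the Hilbert--Schmidt norm, leaving a power $(t_k-s)^{-(\beta+\alpha/2)}$ that is integrable precisely because $\beta+\alpha/2<1/2$. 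The temporal H\"older regularity of $s\mapsto u(s)$ in $D((-A)^\rho)$ (of any order $\delta<\min(1-\sigma,(1-\alpha)/2-\beta)$, a consequence of \eqref{eq:mildSolutionCond} and the same smoothing/isometry estimates) is needed to bound $\|u(t_j)-u(s)\|$ for $s\in[t_j,t_{j+1}]$. Collecting the non-Lipschitz contributions yields a bound $\lesssim \tau^\delta+\tfrac1k(\mathbb E\|u_0\|^2)^{1/2}$, where the $1/k$ comes from $(I-\tau A)^{-k}$ acting on $u_0$: $\|(I-\tau A)^{-k}\|_{\mathcal L(U)}=(1-\tau\lambda_1)^{-k}$ decays geometrically, but uniformly in $\tau$ one only gets the polynomial rate $(1+t_k|\lambda_1|/k)^{-k}\lesssim 1/k$ when one wants a bound independent of $\tau$ — alternatively the $1/k$ factor is exactly what \cite{bib:Printems2001} records and I would simply reproduce that computation.

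The final step is a discrete Gronwall argument: setting $\epsilon_k:=(\mathbb E\|u(t_k)-u_k\|_{D((-A)^\rho)}^2)^{1/2}$, the Lipschitz terms contribute $C\tau\sum_{j=0}^{k-1}\epsilon_j$ from the drift and $C(\tau\sum_{j=0}^{k-1}\epsilon_j^2)^{1/2}$ from the stochastic part (after It\^o's isometry and the Lipschitz bound on $B$), and one arrives at an inequality of the form $\epsilon_k^2\leq C(\tau^{2\delta}+k^{-2}\mathbb E\|u_0\|^2)+C\tau\sum_{j=0}^{k-1}\epsilon_j^2$; the discrete Gronwall lemma then gives $\epsilon_k\leq C e^{CT}(\tau^\delta+\tfrac1k(\mathbb E\|u_0\|^2)^{1/2})$, with $C$ depending only on $\delta,A,B,f,\alpha,\beta,\sigma,T$ as claimed. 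The main obstacle — and the only part requiring real care rather than bookkeeping — is the estimate of the deterministic and stochastic \emph{kernel differences}, i.e.\ bounding $\int_0^{t_k}e^{(t_k-s)A}\Phi(s)\md s-\tau\sum_{j=0}^{k-1}(I-\tau A)^{-(k-j)}\Phi(t_j)$ (for $\Phi$ taking values in the appropriate fractional-domain space, in the Bochner sense for the drift and, after It\^o's isometry, in the Hilbert--Schmidt sense for the noise) by $C\tau^\delta$ uniformly in $K$: this needs the sharp interplay between the fractional smoothing rates from Lemma~\ref{lem:estimatesResolvent}, the singularity integrability condition $\beta+\alpha/2<1/2$, the H\"older regularity of $u$, and a discrete-versus-continuous quadrature comparison. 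Everything else is a faithful transcription of \cite[proof of Theorem~3.2]{bib:Printems2001} through the $\rho$-shift, and I would present it as such, pointing to that reference for the routine parts and giving full detail only where the shift or the uniform-in-$K$ claim genuinely changes something.
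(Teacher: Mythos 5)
Your proposal is correct and takes essentially the same route as the paper: reduce to the case $\rho=0$ via the shifted operator $\hat A$ on $\hat U=D((-A)^\rho)$ (observing that the resolvents, and hence the Euler iterates, coincide under this identification, as in the proof of Proposition~\ref{prop:SPDEexistence}) and then invoke \cite[Theorem~3.2]{bib:Printems2001}, whose constant depends only on the listed quantities. The extra sketch of the internals of Printems' proof is not needed, and your attribution of the $1/k$ term to the decay of $(I-\tau A)^{-k}$ alone is not quite accurate (it stems from the operator-norm bound on the difference $e^{t_kA}-(I-\tau A)^{-k}$ applied to non-smooth initial data), but since you defer to the cited computation for that step, this does not affect the validity of the argument.
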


\begin{proof}
We argue as in the proof of Proposition~\ref{prop:SPDEexistence} and consider the equation
\begin{equation}\label{shiftedSPDE2}
\md u(t) = \big(\hat A u(t) + f(u(t))\big) \md t + B(u(t)) \md W(t), \quad u(0) = u_0,
\end{equation}
where the operator $\hat A:\;D(\hat A)\subset \hat U\to\hat U$ is defined by $\hat U:=D((-A)^\rho)$, $D(\hat A):=D((-A)^{\rho+1})$, and $\hat Av:=Av$, $v\in D(\hat A)$.
Eq.~\eqref{shiftedSPDE2} fits into the setting of \cite{bib:Printems2001}, and its mild solution $u=(u(t))_{t\in[0,T]}$ coincides with the mild solution to Eq.~\eqref{SPDE}, compare the proof of Proposition~\ref{prop:SPDEexistence}.
For $K\in\mathbb N$ let $(\hat u_k)_{k=0}^K$ be given by the linearly implicit Euler scheme
\begin{equation*}
\begin{alignedat}{2}
\hat u_0&=u_0,&\\
\hat u_{k+1}&=(I-\tau \hat A)^{-1}&\big(\hat u_k+\tau f(\hat u_k)+\sqrt\tau B(\hat u_k)\chi_{k}\big), \quad k=0,\ldots,K-1.
\end{alignedat}
\end{equation*}
By Theorem~3.2 in \cite{bib:Printems2001} we have, for all $\delta<\min(1-\sigma,(1-\alpha)/2-\beta)$,
\begin{equation*}
\Big(\mathbb E\lVert u(t_k)-\hat u_k\rVert_{\hat U}^2\Big)^{1/2}\leq C \left(\tau^\delta + \frac 1k\Big(\mathbb E\lVert u_0\rVert_{\hat U}^2\Big)^{1/2}\right),
\end{equation*}
$1\leq k\leq K$. The proof in \cite{bib:Printems2001} reveals that the constant $C>0$ depends only on $\delta$, $\hat A$, $B$, $f$, $\alpha$, $\beta$, $\sigma$ and $T$. The assertion of Proposition~\ref{errorEulerSPDE} follows from the fact that the natural extensions and restrictions of the operators $(I-\tau\hat A)^{-1}$ and $(I-\tau A)^{-1}$  to the spaces $D((-\hat A)^s)=D((-A)^{s+\rho})$, $s\in\R$, coincide, so that $\hat u_k=u_k$ for all $0\leq k\leq K$, $K\in\N$.
\end{proof}

\begin{Remark}\label{rem:ErrorEulerSPDE}
\textit{(i)} If $k\geq K^\delta$ ($\delta>0$), then $1/k\leq T^{-\delta}\tau^\delta$, and we obtain
\begin{equation}\label{eq:RemarkErrorEulerSPDE}
\Big(\mathbb E\lVert u(t_k)-u_k\rVert_{D((-A)^\rho)}^2\Big)^{1/2}\leq C \, \tau^\delta
\end{equation}
with a constant $C>0$ that depends only on $\delta$, $u_0$, $A$, $B$, $f$, $\alpha$, $\beta$, $\sigma$ and $T$. Since $\delta$ is always smaller than $1$, it follows in particular that \eqref{eq:RemarkErrorEulerSPDE} holds for $k=K$, i.e., 
\begin{equation*}
\Big(\mathbb E\lVert u(T)-u_K\rVert_{D((-A)^\rho)}^2\Big)^{1/2}\leq C \, \tau^\delta.
\end{equation*}

\noindent\textit{(ii)} The proof of Proposition~\ref{errorEulerSPDE} is based on an application of Theorem~3.2 in \cite{bib:Printems2001} to Eq.~\eqref{shiftedSPDE}. The  reader might have observed that therein the parameter $s$, which corresponds to our parameter $\sigma$, is assumed to be positive. However, a closer look at the estimates in the proof of Theorem~3.2 in \cite{bib:Printems2001} reveals that the result can be extended to negative values of $\sigma$ and $s$, respectively. Alternatively, one can argue that if $\sigma\leq0$ then $D((-\hat A)^{-\sigma})$ is  continuously embedded into, say, $D((-\hat A)^{-1/2})$, so that Eq.~\eqref{shiftedSPDE} fits into the setting of \cite{bib:Printems2001} if $f$ is considered as a mapping from $\hat U=D((-\hat A)^0)$ to $D((-\hat A)^{-1/2})$. We refer to \cite{bib:CVN2012} where the Euler scheme for stochastic evolution equations is considered in a more general setting than in \cite{bib:Printems2001}.
\end{Remark}

\section{Error control for the inexact scheme}\label{sec:discretization-in-time-and-space}

So far we have verified the existence and uniqueness of a mild solution to Eq.~\eqref{SPDE} as well as the convergence of the exactly evaluated linearly implicit Euler scheme \eqref{eq:EulerSPDE} with rate $\delta<\min(1-\sigma,(1-\alpha)/2-\beta)$.
We now turn to the corresponding inexact scheme. 
We assume that we have at hand a `black box' numerical scheme, which for any element $w\in L_2(\Omega,\mathscr F_{t_k},\mathbb{P};D((-A)^\rho)$ approximates 
\[
v:=(I-\tau A)^{-1} \big(w + f(w) + B(w)\chi_k \big)
\]
with a prescribed tolerance $\varepsilon>0$, the error being measured with respect to the   $L_2(\Omega,\mathscr F_{t_{k+1}},\mathbb{P};D((-A)^\rho)$-norm.
What we have in mind are $\omega$-wise applications of some deterministic solver for elliptic equations, e.g., an optimal adaptive wavelet solver as developed in \cite{bib:CDD01}, combined with proper evaluations of the nonlinearities $f$ and $B$, see, e.g., \cite{bib:Kap2011, bib:DahSchXu2000, bib:CohDahDeV2000}, and an adequate truncation of the noise $B(w)\chi_k$.
We start with the initial condition $u_0$ and in each time step, we apply this `black box' method to the approximation we have obtained in the step before. 
Our main goal is to prove that the tolerances in the different time steps can be chosen in such a way that the inexact scheme achieves the same convergence rate (in time direction) as its exact counterpart (Theorem~\ref{thm:central-theorem-in-stochastic-section}). 
To this end we also analyze the error propagation of the inexact scheme (Proposition~\ref{prop:OverallErrorEstimateSPDE}).

Our strategy relies on the ideas presented in \cite{bib:CDDFKLRRS2014}. Therein, Rothe's method for deterministic parabolic equations is analyzed by putting it into an abstract framework, cf.\ in particular \cite[Section~2]{bib:CDDFKLRRS2014}.
We proceed similarly. Therefore, we first of all reformulate the exact linearly implicit Euler scheme~\eqref{eq:EulerSPDE} in the following way.
We set 
\begin{equation}\label{eq:SPDEinterpretation1}
\left.
\begin{alignedat}{2}
\H_k&:=L_2(\Omega,\mathscr F_{t_k},\mathbb P;D((-A)^\rho)),&& k=0,\ldots,K,\\
\G_k&:=L_2(\Omega,\mathscr F_{t_k},\mathbb P;G),&& k=1,\ldots,K,\\
%=L_2\big(\Omega,\mathscr F,\mathbb P;D((-A)^{r-(s\vee(\beta+\alpha/2))})\big)
R_{\tau,k}&:\H_{k}\to\G_{k+1}\\
&\quad v\mapsto R_{\tau,k}(v):= v+\tau f(v)+\sqrt{\tau} B(v)\chi_{k},&\quad& k=0,\ldots,K-1,\\
L_{\tau}^{-1}&:\G_k\to\H_k\\
&\quad v\mapsto L_{\tau}^{-1} v:=(I-\tau A)^{-1}v,&& k=1,\ldots,K.
\end{alignedat}
\right\}
\end{equation}
Recall that $G=D((-A)^{\rho-\max(0,\sigma,\beta+\alpha/2)})$ has been introduced in \eqref{eq:G}.
With these definitions at hand, we can rewrite the scheme~\eqref{eq:EulerSPDE} as 
\begin{equation}\label{eq:abstractEulerSPDE}
u_{k+1}:=L_{\tau}^{-1}R_{\tau,k}(u_k),\quad k=0,\ldots,K-1.
\end{equation}

\begin{Remark}
Without additional assumptions on $B$ or a truncation of the noise expansion \eqref{eq:noiseExpansion},
the operator $R_{\tau,k}$ cannot easily be traced back to a family of operators
\begin{equation*}
R_{\tau,k,\omega}:D((-A)^\rho)\to G, \; \omega\in\Omega,
\end{equation*}
in the sense that for $v\in\mathcal H_{k}=L_2(\Omega,\mathscr F_{t_{k}},\mathbb P;D((-A)^\rho))$ the image $R_{\tau,k}(v)$ is determined by
\begin{equation}\label{eq:omegawiseR}
(R_{\tau,k}(v))(\omega)=R_{\tau,k,\omega}(v(\omega))\quad\text{  for } \mathbb P\text{-almost all }\omega\in\Omega.
\end{equation}
However,
this is possible, for instance, if for all $v\in D((-A)^\rho)$ the operator $B(v):\ell_2\to D((-A)^{\rho-\beta})$ has a continuous extension $B(v):U_0\to D((-A)^{\rho-\beta}$) to a bigger Hilbert space $U_0$ such that $\ell_2$ is embedded into $U_0$ via a Hilbert--Schmidt embedding.
Another instance where a representation of the form \eqref{eq:omegawiseR} is possible is the case where the mapping $B:D((-A)^\rho)\to\mathcal L(\ell_2;D((-A)^{\rho-\beta}))$ is constant, i.e., the case of additive noise. We take a closer look at the latter case, writing $B\in\mathcal L(\ell_2;D((-A)^{\rho-\beta}))$ for short. We fix a version of each of the $\mathbb P$-almost surely determined, $G$-valued random variables $B\chi_{k}=B\chi_{k}^K$, $k=0,\ldots,K-1$, $K\in\mathbb N$, and set
\begin{equation*}
R_{\tau,k,\omega}(v):=v+f(v)
+(B\chi_{k})(\omega), \qquad \omega\in\Omega,\; v\in D((-A)^\rho).
\end{equation*}
It is clear that \eqref{eq:omegawiseR} holds for all $v\in L_2(\Omega,\mathscr F_{t_{k}},\mathbb P;D((-A)^\rho))$. Thus, in the case of additive noise,  
by setting
\begin{equation}%\tag{\ref{eq:SPDEinterpretation1}$_\omega$}
\left.
\begin{aligned}
\mathcal H_k&:=D((-A)^\rho)),\\
\mathcal G_k&:=G=D((-A)^{\rho-\max(0,\sigma,\beta+\alpha/2)}),\\
%=L_2\big(\Omega,\mathscr F,\mathbb P;D((-A)^{r-(s\vee(\beta+\alpha/2))})\big)
R_{\tau,k,\omega}&:\mathcal H_k\to\mathcal G_{k+1}\\
&\quad v\mapsto R_{\tau,k,\omega}(v):= v+\tau f(v)+ \sqrt{\tau} (B\chi_{k})(\omega),\\
L_\tau^{-1}&:\mathcal G_k\to\mathcal H_k\\
&\quad v\mapsto L_\tau^{-1} v:=(I-\tau A)^{-1}v,
\end{aligned}\quad
\right\}
\end{equation}
$k=0,\ldots,K-1$,
we can rewrite the scheme \eqref{eq:EulerSPDE} in an $\omega$-wise sense as 
\[
u_{k+1}(\omega):=L_{\tau}^{-1}R_{\tau,k,\omega}(u_k(\omega)),\quad k=0,\ldots,K-1.
\]
\end{Remark}

In this section we are focusing on the inexact counterpart of the scheme \eqref{eq:EulerSPDE}, which we introduce now. 
We assume that we have a numerical scheme which, for any $k=0,\ldots,K-1$, any $w\in\H_k$, and any prescribed tolerance $\epsilon>0$, provides an approximation $[v]_\epsilon$ of
\begin{equation*}
v=L_\tau^{-1}R_{\tau,k}(w),
\end{equation*}
such that
\begin{equation*}
\lVert v-[v]_\epsilon\rVert_{\H_{k+1}}
=\left(\mathbb E\|v-[v]_{\epsilon}\|_{D((-A)^\rho)}^2\right)^{1/2}\leq\epsilon.
\end{equation*}
Given prescribed tolerances $\epsilon_k$, $k=0,\ldots,K-1$, for the spatial approximation errors in each time step, we consider the \emph{inexact linearly implicit Euler scheme}, defined as follows: %\eqref{eq:abstractEulerSPDE}
\begin{equation}\label{eq:abstractEulerSPDE-ie}
\left.
\begin{aligned}
\tilde u_0&:=u_0,\\
\tilde u_{k+1}&:= [L_\tau^{-1}R_{\tau,k}(\tilde{u}_{k})]_{\epsilon_{k}},\quad k=0,\ldots,K-1.
\end{aligned}
\,\right\}
\end{equation}
Note that the errors at each time step accumulate due to the iterated application of the numerical method $[\cdot]_\varepsilon$.

Next we present the main result of this paper. It describes a way how to choose the tolerances in the different time steps so that the inexact scheme~\eqref{eq:abstractEulerSPDE-ie} has the same convergence rate as its exact counterpart \eqref{eq:abstractEulerSPDE}. 

\begin{Theorem}\label{thm:central-theorem-in-stochastic-section}
Let Assumptions
\ref{AssumptionASPDE}, \ref{AssumptionfBSPDE} and \ref{Assumptionu0BSPDE} be fulfilled, $(u(t))_{t\in[0,T]}$ be the unique mild solution in $D((-A)^\rho)$ to Eq.~\eqref{SPDE} and let
\begin{equation*}
\delta<\min(1-\sigma,(1-\alpha)/2-\beta).
\end{equation*}
If one chooses
\[\epsilon_k\leq \tau^{1+\delta}\]
for all $k=0,\ldots,K-1$, $K\in\N$, then the output $\tilde u_K$ of the inexact linearly implicit Euler scheme \eqref{eq:abstractEulerSPDE-ie} converges to $u(T)$ with rate $\delta$, i.e., we have
\begin{equation*}
\left(\mathbb E\|u(T)-\tilde u_K\|_{D((-A)^\rho)}^2\right)^{1/2}
\leq C \tau^\delta
\end{equation*}
with a constant $C$ depending only on $u_0$, $\delta$, $A$, $B$, $f$, $\alpha$, $\beta$, $\sigma$ and $T$.
\end{Theorem}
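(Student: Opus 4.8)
The natural approach is to combine the convergence estimate for the exact scheme (Proposition~\ref{errorEulerSPDE}, respectively its consequence~\eqref{eq:RemarkErrorEulerSPDE}) with an error-propagation bound for the inexact scheme (Proposition~\ref{prop:OverallErrorEstimateSPDE}, to be established just before this theorem). Writing $u_k$ for the exact Euler iterates and $\tilde u_k$ for the inexact ones, the triangle inequality in $\H_K = L_2(\Omega,\mathscr F_{t_K},\mathbb P;D((-A)^\rho))$ gives
\[
\left(\mathbb E\|u(T)-\tilde u_K\|_{D((-A)^\rho)}^2\right)^{1/2}
\leq \left(\mathbb E\|u(T)-u_K\|_{D((-A)^\rho)}^2\right)^{1/2}
+ \|u_K-\tilde u_K\|_{\H_K}.
\]
The first term is bounded by $C\tau^\delta$ by Remark~\ref{rem:ErrorEulerSPDE}(i). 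For the second term I would invoke the error-propagation estimate, which — following the deterministic template of~\cite{bib:CDDFKLRRS2014} and the announced Proposition~\ref{prop:OverallErrorEstimateSPDE} — should read
\[
\|u_K-\tilde u_K\|_{\H_K}\leq \sum_{j=0}^{K-1} \Lipi{j+1,K}\,\epsilon_j,
\]
where $\Lipi{j+1,K}$ is the Lipschitz constant of the composed map $E_{\tau,j+1,K}=(L_\tau^{-1}R_{\tau,K-1})\circ\cdots\circ(L_\tau^{-1}R_{\tau,j+1})$ acting between the appropriate $L_2(\Omega;D((-A)^\rho))$ spaces.

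The crux is therefore a uniform-in-$K$ bound on these Lipschitz constants. First I would bound the Lipschitz constant of a single step $L_\tau^{-1}R_{\tau,k}$. Using the isometry~\eqref{eq:isometry-wn} together with the stochastic independence of $\chi_k$ and $\mathscr F_{t_k}$, and the Lipschitz assumptions on $f$ and $B$ (Assumption~\ref{AssumptionfBSPDE}), one gets for $v,w\in\H_k$ a bound of the shape
\[
\|L_\tau^{-1}R_{\tau,k}(v)-L_\tau^{-1}R_{\tau,k}(w)\|_{\H_{k+1}}
\leq \|(I-\tau A)^{-1}\|\cdot\big(1 + \tau C_f^{\mathrm{Lip}}\|(I-\tau A)^{-1}(-A)^{\sigma}\| + \sqrt\tau\, C_B^{\mathrm{Lip}}\|(I-\tau A)^{-1}(-A)^{\beta+\alpha/2}\|\cdot(\text{Tr}(-A)^{-\alpha})^{1/2}\big)\|v-w\|_{\H_k},
\]
and then Lemma~\ref{lem:estimatesResolvent} with $n=1$ converts the resolvent-operator norms into $\tau$-powers: roughly $\|(I-\tau A)^{-1}(-A)^{\sigma}\|\lesssim \tau^{-\sigma}$ and $\|(I-\tau A)^{-1}(-A)^{\beta+\alpha/2}\|\lesssim \tau^{-(\beta+\alpha/2)}$ (the single-resolvent case of the lemma, valid because $\sigma<1$ and $\beta+\alpha/2<1/2<1$), while $\|(I-\tau A)^{-1}\|\leq 1$. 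Hence each factor contributes $1 + C(\tau^{1-\sigma} + \tau^{1/2-\beta-\alpha/2})\leq 1 + C\tau^{\gamma_0}$ for some $\gamma_0>0$ determined by $\min(1-\sigma,1/2-\beta-\alpha/2)$.

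Composing $K-1-j\leq K$ such steps gives $\Lipi{j+1,K}\leq (1+C\tau^{\gamma_0})^{K}$, and since $K\tau = T$ and $\gamma_0>0$ forces $\tau^{\gamma_0}\to 0$, we have $(1+C\tau^{\gamma_0})^K\leq \exp(CK\tau^{\gamma_0}) = \exp(CT\tau^{\gamma_0-1})$ — which blows up unless $\gamma_0\geq 1$, i.e.\ the crude bound $1+C\tau^{\gamma_0}$ is not good enough. The fix, exactly as in~\cite{bib:CDDFKLRRS2014}, is to keep the multiplicative factor as $1+C\tau$ rather than $1+C\tau^{\gamma_0}$; this requires absorbing the fractional $\tau$-powers more carefully, e.g.\ by using that $L_\tau^{-n}$ (composition of $n$ resolvents) also obeys Lemma~\ref{lem:estimatesResolvent} with the sharper decay $(n\tau)^{-s}$, so that iterating the composed operator $E_{\tau,j+1,K}$ \emph{directly} — rather than step by step — yields $\Lipi{j+1,K}\leq C$ uniformly in $K$, $j$. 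This is the main obstacle and the technical heart of Proposition~\ref{prop:OverallErrorEstimateSPDE}; I expect it to rest on a discrete Gronwall / summation argument applied to the split $E_{\tau,j+1,K} = L_\tau^{-(K-1-j)} + (\text{nonlinear perturbation involving } f, B)$, using the $(n\tau)^{-s}$-decay to make the perturbation series of $f$- and $B$-terms summable with a bound independent of $K$.

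Granting $\Lipi{j+1,K}\leq C$, the choice $\epsilon_j\leq\tau^{1+\delta}$ yields
\[
\|u_K-\tilde u_K\|_{\H_K}\leq \sum_{j=0}^{K-1} C\,\tau^{1+\delta} = CK\tau^{1+\delta} = CT\,\tau^\delta,
\]
so both terms in the triangle inequality are $O(\tau^\delta)$ and the theorem follows, with the constant depending only on the listed data. I would close by noting that the same reasoning applied up to an intermediate index $k$ (using the full Proposition~\ref{errorEulerSPDE} with its $1/k$ term) gives the corresponding intermediate-time estimate if desired.
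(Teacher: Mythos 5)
Your overall reduction is exactly the paper's: a triangle inequality between the exact and inexact iterates, the telescoping propagation bound $\lVert u_K-\tilde u_K\rVert_{\H_K}\leq\sum_{j=0}^{K-1}\Lip{j+1}{K}\,\epsilon_j$ (this is Proposition~\ref{prop:OverallErrorEstimateSPDE}), a uniform-in-$K,j$ bound on the Lipschitz constants, and the elementary estimates $1/K\leq T^{-\delta}\tau^\delta$ and $\sum_{j=0}^{K-1}\epsilon_j\leq K\tau^{1+\delta}=T\tau^\delta$. You also correctly see that the naive per-step bound $1+C\tau^{\gamma_0}$ with $\gamma_0=\min(1-\sigma,1/2-\beta-\alpha/2)$ is useless after composition, and that one must instead estimate the composed map directly via the split $E_{\tau,j,k}(v)=L_\tau^{-(k-j)}v+\sum_{i=0}^{k-j-1}L_\tau^{-(k-j)+i}\big(\tau f(E_{\tau,j,j+i}(v))+\sqrt\tau B(E_{\tau,j,j+i}(v))\chi_{j+i}\big)$ together with the $(n\tau)^{-s}$ decay of Lemma~\ref{lem:estimatesResolvent}; this is precisely what the paper does, although it is the content of Proposition~\ref{prop:LipschitzBoundSPDE}, not of Proposition~\ref{prop:OverallErrorEstimateSPDE} as you state.

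The genuine shortcoming is that this uniform Lipschitz bound, the technical heart of the theorem, is left at the level of ``I expect a discrete Gronwall / summation argument,'' and two nontrivial ingredients are missing from your sketch. First, inside the composed estimate the noise contributions must be summed in mean square, using the mutual independence of the increments $\chi_{j+i}$ over $i$ and the isometry \eqref{eq:isometry-wn}; a plain triangle inequality over $i$ gives a bound of order $\sqrt\tau\sum_{m=1}^{k-j}(m\tau)^{-(\beta+\alpha/2)}\sim\tau^{-1/2}$, which destroys uniformity in $K$ — you invoke the isometry only at the single-step level, where it does not help. Second, the resulting recursion for $\Lip{j}{k}$ carries weakly singular weights $(\tau(k-j-i))^{-\sigma}$ and $(\tau(k-j-i))^{-(2\beta+\alpha)}$, so the standard discrete Gronwall lemma does not apply directly; the paper first applies H\"older's inequality with an exponent $q$ chosen so that $\sigma q/(q-1)<1$ and $(2\beta+\alpha)q/(q-2)<1$, which turns the recursion into $(\Lip{j}{k})^q\leq C_0\big(1+\sum_{i}\tau(\Lip{j}{j+i})^q\big)$, to which Gronwall then applies (with the easier separate cases $\sigma\leq0$ and $\beta+\alpha/2\leq0$). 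With these two steps supplied, your plan coincides with the paper's proof; without them, the claim $\Lip{j+1}{K}\leq C$ — on which everything else rests — is not established.
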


Our strategy for proving this theorem relies on two auxiliary results, which we prove first. 
We start with Proposition~\ref{prop:OverallErrorEstimateSPDE}, which states that the error propagation of the inexact linearly implicit Euler scheme can be described in terms of the Lipschitz constants $\Lip{j}{k}$ of the operators
\begin{equation}\label{eq:OperatorsExact}
E_{\tau,j,k}=(L_\tau^{-1} R_{\tau,k-1}) \circ (L_\tau^{-1} R_{\tau,k-2}) \circ \dots \circ (L_\tau^{-1} R_{\tau, j}):\H_j\to\H_k,
\end{equation}
$1\leq j\leq k\leq K,\;K\in\N$. 
Afterwards, we prove that these Lipschitz constants are bounded from above uniformly in $1\leq j,k \leq K$, $K\in\N$, see Proposition~\ref{prop:LipschitzBoundSPDE}.
Finally, at the end of this section, we draw the proof of the main Theorem~\ref{thm:central-theorem-in-stochastic-section}.

\begin{Proposition}\label{prop:OverallErrorEstimateSPDE}
Let Assumptions~\ref{AssumptionASPDE}, \ref{AssumptionfBSPDE} and \ref{Assumptionu0BSPDE} be fulfilled. Let $(u(t))_{t\in[0,T]}$ be the unique mild solution in $D((-A)^\rho)$ to Eq.~\eqref{SPDE}.
Let $(\tilde u_k)_{k=0}^{K}$ be the discretization of $(u(t))_{t\in[0,T]}$ in time and space given by the inexact linearly implicit Euler scheme \eqref{eq:abstractEulerSPDE-ie},
where $\epsilon_k$, $k=0,\ldots,K-1$, are prescribed tolerances for the spatial approximation errors in each time step.
Then, for every $1\leq k\leq K$, $K\in\mathbb N$, and for every
\begin{equation*}
\delta<\min(1-\sigma,(1-\alpha)/2-\beta)
\end{equation*}
we have
\begin{align*}
\left(\mathbb E\|u(t_k)-\tilde u_k\|_{D((-A)^\rho)}^2\right)^{1/2}
& \leq
C\bigg(\tau^\delta + \frac 1k  \Big(\mathbb E\|u_0\|_{D((-A)^\rho)}^2\Big)^{1/2}\bigg)\\
&\qquad+\sum_{j=0}^{k-1}\Lip{j+1}{k}\epsilon_j
\end{align*}
with a constant $C$  that depends only on $\delta$, $A$, $B$, $f$, $\alpha$, $\beta$, $\sigma$ and $T$.
\end{Proposition}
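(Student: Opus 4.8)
The plan is to decompose the total error $u(t_k)-\tilde u_k$ into a time-discretization part and an accumulated-perturbation part, exactly as in the deterministic analysis of \cite{bib:CDDFKLRRS2014}. First I would insert the exact Euler iterate $u_k$ (given by \eqref{eq:abstractEulerSPDE}) and use the triangle inequality in the norm of $\H_k=L_2(\Omega,\mathscr F_{t_k},\mathbb P;D((-A)^\rho))$ to write
\begin{equation*}
\|u(t_k)-\tilde u_k\|_{\H_k}\leq \|u(t_k)-u_k\|_{\H_k}+\|u_k-\tilde u_k\|_{\H_k}.
\end{equation*}
The first term is bounded by $C(\tau^\delta+\tfrac1k(\mathbb E\|u_0\|_{D((-A)^\rho)}^2)^{1/2})$ directly by Proposition~\ref{errorEulerSPDE}. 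So the whole work is to show $\|u_k-\tilde u_k\|_{\H_k}\leq\sum_{j=0}^{k-1}\Lip{j+1}{k}\epsilon_j$.

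For the perturbation term I would proceed by a telescoping argument over the time steps, which is the standard device for inexact iterated maps. Define, for $0\le j\le k$, the hybrid iterate $w_j^{(k)}$ obtained by running the \emph{exact} scheme \eqref{eq:abstractEulerSPDE} from step $j$ onward, started from the inexact value $\tilde u_j$; that is $w_j^{(k)}:=E_{\tau,j,k}(\tilde u_j)$ using the operators $E_{\tau,j,k}$ from \eqref{eq:OperatorsExact}, with $w_0^{(k)}=u_k$ (since $\tilde u_0=u_0$) and $w_k^{(k)}=\tilde u_k$. Then
\begin{equation*}
u_k-\tilde u_k=\sum_{j=0}^{k-1}\bigl(w_j^{(k)}-w_{j+1}^{(k)}\bigr).
\end{equation*}
Now $w_j^{(k)}=E_{\tau,j+1,k}\bigl(L_\tau^{-1}R_{\tau,j}(\tilde u_j)\bigr)$ and $w_{j+1}^{(k)}=E_{\tau,j+1,k}\bigl(\tilde u_{j+1}\bigr)=E_{\tau,j+1,k}\bigl([L_\tau^{-1}R_{\tau,j}(\tilde u_j)]_{\epsilon_j}\bigr)$, so the two arguments of $E_{\tau,j+1,k}$ differ by at most $\epsilon_j$ in $\H_{j+1}$ by the definition of the black-box tolerance in \eqref{eq:abstractEulerSPDE-ie}. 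Applying the Lipschitz estimate for $E_{\tau,j+1,k}:\H_{j+1}\to\H_k$ with constant $\Lip{j+1}{k}$ gives $\|w_j^{(k)}-w_{j+1}^{(k)}\|_{\H_k}\leq\Lip{j+1}{k}\epsilon_j$, and summing over $j$ yields the claimed bound after re-inserting the Proposition~\ref{errorEulerSPDE} term.

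The one point that needs genuine care — and what I expect to be the main obstacle — is the measurability/adaptedness bookkeeping underlying the very notion of $\Lip{j+1}{k}$ as a Lipschitz constant of a map between the $L_2(\Omega;\cdot)$ spaces: one must check that $E_{\tau,j+1,k}$ really is a well-defined Lipschitz map $\H_{j+1}\to\H_k$, i.e.\ that each application of $L_\tau^{-1}R_{\tau,\cdot}$ respects the filtration (mapping $\mathscr F_{t_m}$-measurable inputs to $\mathscr F_{t_{m+1}}$-measurable outputs, using the independence of $\chi_m$ from $\mathscr F_{t_m}$ noted after \eqref{eq:noiseExpansion}) and that the black-box approximation $[\cdot]_{\epsilon_j}$ likewise produces an $\mathscr F_{t_{j+1}}$-measurable element of $\H_{j+1}$, as assumed in the setup around \eqref{eq:abstractEulerSPDE-ie}. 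Granting this — and the boundedness of $\Lip{j+1}{k}$ is deferred to Proposition~\ref{prop:LipschitzBoundSPDE} — the argument is the telescoping/triangle-inequality computation above, and the constant $C$ is inherited verbatim from Proposition~\ref{errorEulerSPDE}, hence depends only on $\delta$, $A$, $B$, $f$, $\alpha$, $\beta$, $\sigma$ and $T$.
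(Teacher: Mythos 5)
Your proposal is correct and follows essentially the same route as the paper: triangle inequality against the exact iterate $u_k$, Proposition~\ref{errorEulerSPDE} for the time-discretization part, and a telescoping sum over hybrid iterates $E_{\tau,j,k}(\tilde u_j)$ with the Lipschitz constants $\Lip{j+1}{k}$ absorbing the per-step tolerances $\epsilon_j$. The measurability point you flag is exactly the caveat the paper records (the spaces $\H_j$, $\G_j$ depend on the filtration index), and the finiteness of the $\Lip{j}{k}$ is noted there via Assumption~\ref{AssumptionfBSPDE} and Lemma~\ref{lem:estimatesResolvent}, so nothing essential is missing.
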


\begin{proof}
We argue following the lines of the proof of \cite[Theorem~2.16]{bib:CDDFKLRRS2014}. 
To this end, we introduce the operators
\begin{align*}
\tilde{E}_{\tau,k,k+1}: \H_k & \to \H_{k+1}\\
v & \mapsto \tilde{E}_{\tau,k,k+1}(v) := [L_\tau^{-1} R_{\tau,k}(v)]_{\varepsilon_k}
\end{align*}
for $k=0,\ldots,K-1$, and for $0\leq j\leq k$ we set 
\begin{equation*}
    \tilde{E}_{\tau,j,k} 
    := 
    \begin{cases}
        \tilde{E}_{\tau,k-1,k} \circ \ldots \circ \tilde{E}_{\tau,j,j+1}, & j<k\\
        I, & j=k. 
    \end{cases}
\end{equation*}
Note that, in order to define the operators $E_{\tau,j,k}$ and $\tilde{E}_{\tau,j,k}$ properly, we have to consider the measurability of the elements in their domains and in the corresponding image spaces, cf. our remarks concerning the right understanding of the scheme~\eqref{eq:EulerSPDE} at the beginning of Section~\ref{sec:semi-discretization-in-time}.
Therefore, in contrast to the situation in \cite[Section~2]{bib:CDDFKLRRS2014}, these Hilbert spaces depend on the indexes $j$ and $k$.
Nevertheless, with the right changes, we can argue along the lines of \cite[Theorem~2.16]{bib:CDDFKLRRS2014}:
We rewrite the difference $u_k-\tilde{u}_k$ between the output of the exact and inexact scheme at time $t_k$ as an appropriate telescopic sum, so that by consecutive applications of the triangle inequality in $\H_k$ we obtain:
\begin{align*}
\lVert u(t_k) - \tilde{u}_k \rVert_{\H_k}
&\leq
\lVert u(t_k) - u_k \rVert_{\H_k}
+
\lVert u_k-\tilde{u}_k \rVert_{\H_k}\\
&\leq
\lVert u(t_k) - u_k \rVert_{\H_k}\\
& \qquad +
\sum_{j=0}^{k-1}
\big\lVert E_{\tau,j,k} \tilde{E}_ {\tau,0,j} (u_0) - E_{\tau,j+1,k}\tilde{E}_{\tau,0,j+1}(u_0)\big\rVert_{\H_k}.
\end{align*}
Note that $\Lip jk <\infty$ for all $1\leq j\leq k\leq K$, $K\in\N$, because of Assumption~\ref{AssumptionfBSPDE} on the Lipschitz continuity of the free terms $f$ and $B$ and because of Lemma~\ref{lem:estimatesResolvent} on the boundedness of the resolvents of $A$.
Thus, each term in the sum on the right hand side can be estimated as follows
\begin{align*}
\big\lVert E_{\tau,j,k} \tilde{E}_ {\tau,0,j} (u_0) - & E_{\tau,j+1,k}\tilde{E}_{\tau,0,j+1}(u_0)\big\rVert_{\H_k}\\
&=
\big\lVert E_{\tau,j+1,k}E_{\tau,j,j+1} \tilde{E}_ {\tau,0,j} (u_0) - E_{\tau,j+1,k}\tilde{E}_{\tau,0,j+1}(u_0)\big\rVert_{\H_k}\\
&\leq \Lip{j+1}{k}
\big\lVert E_{\tau,j,j+1} \tilde{E}_ {\tau,0,j} (u_0) - \tilde{E}_{\tau,0,j+1}(u_0)\big\rVert_{\H_{j+1}}.
\end{align*}
Since $\tilde{u}_j=\tilde{E}_{\tau,0,j}(u_0)$, we obtain
\begin{align*}
\big\lVert E_{\tau,j,j+1} \tilde{E}_ {\tau,0,j} (u_0) - & \tilde{E}_{\tau,0,j+1}(u_0)\big\rVert_{\H_{j+1}}\\
&=
\big\lVert E_{\tau,j,j+1} (\tilde{u}_j) - \tilde{E}_{\tau,j,j+1}(\tilde{u}_j)\big\rVert_{\H_{j+1}}
\leq \varepsilon_j.
\end{align*}
Putting these estimates together yields:
\begin{align*}
\lVert u(t_k) - \tilde{u}_k \rVert_{\H_k}
\leq
\lVert u(t_k) - u_k \rVert_{\H_k}
+
\sum_{j=0}^{k-1}
\Lip{j+1}{k} \varepsilon_j.
\end{align*}
The error of the exact Euler scheme at time $t_k$ appearing on the right hand side can be estimated by using Proposition~\ref{errorEulerSPDE}, which yields the assertion.
\end{proof}
 
In order to prove the main Theorem~\ref{thm:central-theorem-in-stochastic-section}, it remains to verify the uniform boundedness of the Lipschitz constants $\Lip jk$ of the operators $E_{\tau,j,k}$ introduced in \eqref{eq:OperatorsExact}. The proof is based on a Gronwall argument.

\begin{Proposition}\label{prop:LipschitzBoundSPDE}
Let Assumptions \ref{AssumptionASPDE}, \ref{AssumptionfBSPDE} and \ref{Assumptionu0BSPDE} be fulfilled.
There exists a finite constant $C>0$, depending only on $A$, $B$, $f$, $\alpha$, $\beta$, $\sigma$ and $T$, such that the Lipschitz constants $\Lip{j}{k}$ of the operators $E_{\tau,j,k}$ introduced in \eqref{eq:OperatorsExact} satisfy:
\begin{equation*}
C_{\tau,j,k}^{\text{\emph{Lip}}}\leq C\quad
\text{ for all }\; 1\leq j\leq k\leq K, \;K\in\mathbb N.
\end{equation*}
\end{Proposition}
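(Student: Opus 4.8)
The plan is to estimate the Lipschitz constant of a single step $L_\tau^{-1}R_{\tau,k}:\H_k\to\H_{k+1}$ and then close a Gronwall-type recursion over the composition. For two inputs $v,w\in\H_k$ we write $L_\tau^{-1}R_{\tau,k}(v)-L_\tau^{-1}R_{\tau,k}(w)=(I-\tau A)^{-1}\big[(v-w)+\tau(f(v)-f(w))+\sqrt\tau(B(v)-B(w))\chi_k\big]$. I would split this into three contributions and measure each in the $D((-A)^\rho)$-norm, using Lemma~\ref{lem:estimatesResolvent} to absorb the smoothing of $(I-\tau A)^{-1}$ against the loss of regularity coming from $f$, $B$ and from the embedding $D((-A)^{\rho-\beta})\hookrightarrow G$ via the trace-class factor.

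Concretely: the identity term gives $\|(I-\tau A)^{-1}(v-w)\|_{\H_{k+1}}\le\|v-w\|_{\H_k}$ since $(I-\tau A)^{-1}$ is a contraction on $D((-A)^\rho)$ (the case $s=0$ of the lemma with $\lambda_1<0$, $(1-\tau\lambda_1)^{-1}<1$). The drift term: $f$ maps $D((-A)^\rho)\to D((-A)^{\rho-\sigma})$ with constant $C_f^{\mathrm{Lip}}$, and by Lemma~\ref{lem:estimatesResolvent} with $s=\sigma$ (if $\sigma\in(0,1)$; trivially if $\sigma\le0$) we get $\|(I-\tau A)^{-1}\tau(f(v)-f(w))\|_{\H_{k+1}}\le C\tau\cdot\tau^{-\sigma}C_f^{\mathrm{Lip}}\|v-w\|_{\H_k}=C\tau^{1-\sigma}\|v-w\|_{\H_k}$, which is $\le C\tau\cdot\tau^{-\max(0,\sigma)}$, bounded by $C\tau$ times a harmless power. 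The noise term: by the white-noise isometry \eqref{eq:isometry-wn} applied on the space $D((-A)^\rho)$ after pulling out $(I-\tau A)^{-1}$, and writing the relevant exponent $s=\max(\sigma,\beta+\alpha/2)$ (so that $(I-\tau A)^{-1}:G\to D((-A)^\rho)$ loses $\max(0,\sigma,\beta+\alpha/2)<1$ powers), one gets $\|(I-\tau A)^{-1}\sqrt\tau(B(v)-B(w))\chi_k\|_{\H_{k+1}}\le C\sqrt\tau\cdot\tau^{-\max(0,\sigma,\beta+\alpha/2)}\cdot(\operatorname{Tr}(-A)^{-\alpha})^{1/2}C_B^{\mathrm{Lip}}\|v-w\|_{\H_k}$. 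The key quantitative point, exactly as in the error analysis, is that $\beta<(1-\alpha)/2$ forces $\beta+\alpha/2<1/2$, hence $\tfrac12-\max(0,\sigma,\beta+\alpha/2)>0$ precisely when $\sigma<1/2$, but when $\sigma\in[1/2,1)$ this term is dominated by $\tau^{1-\sigma}$ anyway; in all cases the single-step Lipschitz constant is $\le 1+C\tau^{\gamma}$ for some $\gamma>0$ with $C$ independent of $\tau$ and $k$.

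Having the one-step bound $C^{\mathrm{Lip}}_{\tau,k,k+1}\le 1+C\tau^{\gamma}\le e^{C\tau^{\gamma}}$, the Lipschitz constant of the composition $E_{\tau,j,k}$ is at most the product $\prod_{i=j}^{k-1}(1+C\tau^{\gamma})\le e^{C\tau^\gamma(k-j)}\le e^{C\tau^{\gamma}K}$. If $\gamma\ge1$ this is $\le e^{CT\tau^{\gamma-1}}\le e^{CT}$ and we are done; the subtle case is $0<\gamma<1$, where $\tau^{\gamma}K=T^\gamma K^{1-\gamma}$ blows up. The standard fix — and I expect this to be the main obstacle — is to avoid estimating each step as $(1+C\tau^\gamma)$ and instead track the accumulated effect through a discrete Gronwall inequality that exploits the additional smoothing of powers of the resolvent: one estimates $\|E_{\tau,j,k}(v)-E_{\tau,j,k}(w)\|$ directly by expanding the composition, so that the ``bad'' $\tau^{-\sigma}$ and $\tau^{-(\beta+\alpha/2)}$ singularities from intermediate steps get integrated against $\sum_{i}(\tau(k-i))^{-s}$-type sums rather than multiplied, which converge (as Riemann sums of integrable singularities since $s<1$) to a finite constant depending only on $T$. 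This is precisely the Gronwall argument alluded to before the proposition, and is the analogue of the deterministic estimate in \cite{bib:CDDFKLRRS2014}; the stochastic part only enters through the $L_2(\Omega)$-isometry \eqref{eq:isometry-wn}, which decouples the noise increments step by step because $\chi_k$ is independent of $\mathscr F_{t_k}$, so the variances simply add and no cross terms appear.
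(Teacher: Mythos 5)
Your final plan --- expand $E_{\tau,j,k}$ into $L_\tau^{-(k-j)}v+\sum_i L_\tau^{-(k-j)+i}\big(\tau f(\cdot)+\sqrt\tau B(\cdot)\chi_{j+i}\big)$, use the $(n\tau)^{-s}$ smoothing of iterated resolvents from Lemma~\ref{lem:estimatesResolvent}, exploit the independence of the $\chi_{j+i}$ so that second moments add without cross terms, and then run a discrete Gronwall --- is exactly the paper's route, and your diagnosis that the one-step bound $1+C\tau^\gamma$ with $\gamma<1$ cannot simply be multiplied is also correct. But the decisive closing step is only asserted, and the justification you give for it would not work as stated. After the expansion one arrives at a recursion of the form
\begin{equation*}
\Lip{j}{k}\leq 1+C\sum_{i=0}^{k-j-1}\frac{\tau}{(\tau(k-j-i))^{\sigma}}\,\Lip{j}{j+i}
+C\Bigg(\sum_{i=0}^{k-j-1}\frac{\tau}{(\tau(k-j-i))^{2\beta+\alpha}}\big(\Lip{j}{j+i}\big)^{2}\Bigg)^{1/2},
\end{equation*}
in which the unknown Lipschitz constants sit \emph{inside} the weakly singular sums. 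Merely observing that $\sum_i\tau(\tau(k-j-i))^{-s}$ is bounded (a Riemann sum of an integrable singularity) only yields $\Lip{j}{k}\leq 1+C'\max_i\Lip{j}{j+i}$, which does not close; the ordinary discrete Gronwall lemma does not apply to a singular kernel, and you must also reconcile the linear drift term with the quadratic-under-square-root noise term in a single inequality. The paper resolves both points with one device: H\"older's inequality with an exponent $q$ chosen so that $\sigma q/(q-1)<1$ and $(2\beta+\alpha)q/(q-2)<1$ (e.g.\ $q=1/\min(1-\sigma,(1-\alpha)/2-\beta)+2$), which separates the singular weights (finite integrals over $[0,T]$) from the unknowns and turns the recursion into $(\Lip{j}{k})^q\leq C_0\big(1+\sum_i\tau(\Lip{j}{j+i})^q\big)$, to which the standard discrete Gronwall then applies (plus a case distinction for $\sigma\leq0$ or $\beta+\alpha/2\leq0$, where the kernels are not singular). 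Either this H\"older trick or an explicitly invoked Gronwall lemma for weakly singular kernels has to be supplied; without it your argument stops one step short of the conclusion.

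A smaller slip in the (abandoned) one-step analysis: measuring the noise in $G=D((-A)^{\rho-\max(0,\sigma,\beta+\alpha/2)})$ would give a contribution $\tau^{1/2-\sigma}$ when $\sigma\in[1/2,1)$, which is \emph{not} dominated by $\tau^{1-\sigma}$ --- it diverges as $\tau\to0$. The correct move, as in the paper's estimate of the term $(I\!\!I\!\!I)$, is to pass through $D((-A)^{\rho-\beta-\alpha/2})$ using \eqref{eq:isometry-wn} and the trace-class factor, so that the resolvent loss for the noise is $\beta+\alpha/2<1/2$ independently of $\sigma$.
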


\begin{proof}
Fix
$1\leq j \leq k\leq K$ and observe that,
by induction over $k$,
\begin{align*}
E_{\tau,j,k}(v)=\;&L_\tau^{-(k-j)}v\\
&+\sum_{i=0}^{k-j-1}L_\tau^{-(k-j)+i}\Big(\tau f\big(E_{\tau,j,j+i}(v)\big)+\sqrt\tau B\big(E_{\tau,j,j+i}(v)\big)\chi_{j+i}\Big)
\end{align*}
for all $v\in\H_j$, where we set $E_{j,j}=I$. Therefore, for all $v,w\in\H_j$, we have
\begin{align}\label{SPDELipschitzEstimate1}
\| E_{\tau,j,k}&(v)-E_{\tau,j,k}(w)\|_{\H_k} \nonumber\\
&\leq \|L_\tau^{-(k-j)}v-L_\tau^{-(k-j)}w\|_{\H_k} \nonumber\\
&\;\quad+\sum_{i=0}^{k-j-1}\tau\left\|L_\tau^{-(k-j)+i}
\Big(f\big(E_{\tau,j,j+i}(v)\big)-f\big(E_{\tau,j,j+i}(w)\big)\Big)\right\|_{\H_k} \nonumber\\
&\;\quad+\left\|\sum_{i=0}^{k-j-1}\sqrt\tau L_\tau^{-(k-j)+i}
\Big(B\big(E_{\tau,j,j+i}(v)\big)-B\big(E_{\tau,j,j+i}(w)\big)\Big)\chi_{j+i}\right\|_{\H_k} \nonumber\\
&=: (I) + (I\!\! I) + (I\!\! I\!\! I).
\end{align}
We estimate each of the terms $(I)$, $(I\!\! I)$ and $(I\!\! I\!\! I)$ separately.

By Lemma~\ref{lem:estimatesResolvent} and the trivial fact that $\|v-w\|_{\H_k}=\|v-w\|_{\H_j}$ for all $v,w\in\H_j$, we have
\begin{equation}\label{SPDELipschitzEstimate2}
\begin{aligned}
(I)&\leq \big\|L_\tau^{-1}\big\|_{\mathcal L(D((-A)^\rho))}^{k-j}\|v-w\|_{\mathcal H_k}\\
&\leq (1-\tau\lambda_1)^{-(k-j)}\|v-w\|_{\H_k}\\
&\leq \|v-w\|_{\H_j}.
\end{aligned}
\end{equation}

Concerning the term $(I\!\! I)$ in \eqref{SPDELipschitzEstimate1}, let us first concentrate on the case $\sigma\in (0,1)$. We use the Lipschitz condition on $f$ in Assumption~\ref{AssumptionfBSPDE} and Lemma~\ref{lem:estimatesResolvent}
to obtain
\begin{equation}\label{SPDELipschitzEstimate3}
\begin{aligned}
(I\!\! I)&\leq \sum_{i=0}^{k-j-1}\tau\big\|L_\tau^{-(k-j)+i}(-A)^\sigma\big\|_{\mathcal L(D((-A)^\rho))} \\ %\times \\
&\qquad\qquad\qquad \times C_f^{\text{Lip}}
\big\|E_{\tau,j,j+i}(v)-E_{\tau,j,j+i}(w)\big\|_{\H_{j+i}}\\
&\leq \sum_{i=0}^{k-j-1}\tau\frac{\sigma^\sigma}{(\tau(k-j-i))^\sigma}
C_f^{\text{Lip}}C_{\tau,j,j+i}^{\text{Lip}}\left\|v-w\right\|_{\H_{j}}\\
&\leq C_f^{\text{Lip}}\sum_{i=0}^{k-j-1}\frac{\tau}{(\tau(k-j-i))^\sigma}C_{\tau,j,j+i}^{\text{Lip}}
\left\|v-w\right\|_{\H_{j}}.
\end{aligned}
\end{equation}
For the case that $\sigma\leq 0$ we get with similar arguments 
\begin{equation}\label{SPDELipschitzEstimate3b}
\begin{aligned}
(I\!\! I)&\leq
C_f^{\text{Lip}}\sum_{i=0}^{k-j-1}\frac{\tau (-\lambda_1)^{\sigma}}{(1-\tau\lambda_1)^n}C_{\tau,j,j+i}^{\text{Lip}}
\left\|v-w\right\|_{\H_{j}}\\
&\leq C_f^{\textup{Lip}} (-\lambda_1)^{\sigma} \sum_{i=0}^{k-j-1} \tau C_{\tau,j,j+i}^{\text{Lip}}
\left\|v-w\right\|_{\H_{j}}.
\end{aligned}
\end{equation}

Let us now look at the term $(I\!\! I\!\! I)$ in \eqref{SPDELipschitzEstimate1}. Using the independence of the stochastic increments $\chi_{j+i}$ and the equality in~\eqref{eq:isometry-wn}, we get
\begin{equation*}
\begin{aligned}
(I\!\! I\!\! I)^2\! &= \sum_{i=0}^{k-j-1}\! \tau \mathbb{E}\left\|L_\tau^{-(k-j)+i}
\Big(B\big(E_{\tau,j,j+i}(v)\big)\!-\!B\big(E_{\tau,j,j+i}(w)\big)\Big)\chi_{j+i}\right\|_{D((-A)^{\rho})}^2\\
&\leq \sum_{i=0}^{k-j-1} \tau \big\| L_\tau^{-(k-j)+i} \big\|_{\mathcal{L}(D((-A)^{\rho-\beta- \alpha /2}), D((-A)^{\rho}))}^2 \\
&\qquad\quad\; \times
\mathbb{E}\left\|\Big(B\big(E_{\tau,j,j+i}(v)\big)-B\big(E_{\tau,j,j+i}(w)\big)\Big)\chi_{j+i}\right\|_{D((-A)^{\rho-\beta-\alpha/2})}^2\\
&\leq \sum_{i=0}^{k-j-1} \tau \big\| L_\tau^{-(k-j)+i} \big\|_{\mathcal{L}(D((-A)^{\rho-\beta- \alpha /2}), D((-A)^{\rho}))}^2 \\
&\qquad\quad\; \times
\mathbb{E}\left\| B\big(E_{\tau,j,j+i}(v)\big)-B\big(E_{\tau,j,j+i}(w)\big)\right\|_{\hs(\ell_2;D((-A)^{\rho-\beta-\alpha/2}))}^2.
\end{aligned}
\end{equation*}
Concentrating first on the case $\beta+\alpha/2>0$, we continue by using the Lipschitz condition on $B$ in Assumption~\ref{AssumptionfBSPDE} and Lemma~\ref{lem:estimatesResolvent} to obtain
\begin{equation}\label{SPDELipschitzEstimate4}
\begin{aligned}
(I\!\! I\!\! I)^2&\leq  \sum_{i=0}^{k-j-1}\tau
\frac{(\beta+\alpha/2)^{2\beta+\alpha}}{(\tau(k-j-i))^{2\beta+\alpha}}
\text{Tr}(-A)^{-\alpha}\\
&\qquad\qquad\quad \times (C_{B}^{\text{Lip}})^2\,\mathbb E\left\|E_{\tau,j,j+i}(v)-E_{\tau,j,j+i}(w)\right\|_{D((-A)^\rho)}^2\\
&\leq (C_{B}^{\text{Lip}})^2\operatorname{Tr}(-A)^{-\alpha}\\
&\qquad\qquad\quad \times \sum_{i=0}^{k-j-1}
\frac{\tau}{(\tau(k-j-i))^{2\beta+\alpha}}(C_{\tau,j,j+i}^{\text{Lip}})^2
\|v-w\|_{\H_j}^2.
\end{aligned}
\end{equation}
In the case $\beta+\alpha/2\leq 0$ the same arguments lead to
\begin{equation}\label{SPDELipschitzEstimate4b}
\begin{aligned}
(I\!\! I\!\! I)^2&\leq  \sum_{i=0}^{k-j-1}\tau
\frac{(-\lambda_1)^{2\beta+\alpha}}{(1-\tau\lambda_1)^{2n}}
\text{Tr}(-A)^{-\alpha}\\
&\qquad\qquad \times (C_{B}^{\text{Lip}})^2\mathbb E\left\|E_{\tau,j,j+i}(v)-E_{\tau,j,j+i}(w)\right\|_{D((-A)^\rho)}^2\\
&\leq (C_{B}^{\text{Lip}})^2\operatorname{Tr}(-A)^{-\alpha} (-\lambda_1)^{2\beta+\alpha}\sum_{i=0}^{k-j-1}
\tau(C_{\tau,j,j+i}^{\text{Lip}})^2
\|v-w\|_{\H_j}^2.
\end{aligned}
\end{equation}

Now we have to consider four different cases.

\textbf{Case 1.} $\sigma\in(0,1)$ and $\beta+\alpha/2\in (0,1/2)$.
The combination of \eqref{SPDELipschitzEstimate1}, \eqref{SPDELipschitzEstimate2}, \eqref{SPDELipschitzEstimate3} and \eqref{SPDELipschitzEstimate4} yields
\begin{equation}\label{SPDELipschitzEstimate5}
\begin{aligned}
C_{\tau,j,k}^{\text{Lip}}\leq 1&+C_f^{\text{Lip}}\sum_{i=0}^{k-j-1}\frac{\tau}{(\tau(k-j-i))^\sigma} \, C_{\tau,j,j+i}^{\textup{Lip}}\\
&+C_{B}^{\text{Lip}}(\text{Tr}(-A)^{-\alpha})^{1/2}\left(\sum_{i=0}^{k-j-1}
\frac{\tau}{(\tau(k-j-i))^{2\beta+\alpha}}(C_{\tau,j,j+i}^{\text{Lip}})^2\right)^{1/2}.
\end{aligned}
\end{equation} 
Next, we
estimate the two sums over $i$ on the right hand side of \eqref{SPDELipschitzEstimate5} via H\"{o}lder's inequality. Set
\begin{equation*}
q:=\frac1{\min(1-\sigma,(1-\alpha)/2-\beta)}+2>2.
\end{equation*}
H\"{o}lder's inequality with exponents $q/(q-1)$ and $q$ yields
\begin{equation}\label{SPDELipschitzEstimate6}
\begin{aligned}
\sum_{i=0}^{k-j-1}&\frac{\tau}{(\tau(k-j-i))^\sigma}\,C_{\tau,j,j+i}^{\textup{Lip}}\\
&\leq\left(\sum_{i=0}^{k-j-1}\frac{\tau}{(\tau(k-j-i))^{\frac{\sigma q}{q-1}}}\right)^{\frac{q-1}q}
\left(\sum_{i=0}^{k-j-1}\tau (C_{\tau,j,j+i}^{\text{Lip}})^q\right)^{\frac1q}\\
&\leq\left(\sum_{i=1}^{K}\frac{\tau}{(\tau i)^{\frac{\sigma q}{q-1}}}\right)^{\frac{q-1}q}
\left(\sum_{i=0}^{k-j-1}\tau (C_{\tau,j,j+i}^{\text{Lip}})^q\right)^{\frac1q}\\
&\leq\left(\int_0^Tt^{-\frac{\sigma q}{q-1}}\md t\right)^{\frac{q-1}q}
\left(\sum_{i=0}^{k-j-1}\tau (C_{\tau,j,j+i}^{\text{Lip}})^q\right)^{\frac1q},
%&= C(\alpha,\beta,\sigma)
%\left(\sum_{i=0}^{k-j-1}\tau (C_{\tau,j,j+i}^{\text{Lip}})^q\right)^{\frac1q},
\end{aligned}
\end{equation}
where the integral in the last line is finite since $\frac{\sigma q}{q-1}=\frac{\sigma }{1-1/q}<\frac{\sigma}{1-(1-\sigma)}=1$. Similarly, applying H\"{o}lder's inequality with exponents $q/(q-2)$ and $q/2$,
\begin{equation}\label{SPDELipschitzEstimate7}
\begin{aligned}
\sum_{i=0}^{k-j-1}&
\frac{\tau}{(\tau(k-j-i))^{2\beta+\alpha}}(C_{\tau,j,j+i}^{\text{Lip}})^2\\
&\leq\left(\sum_{i=0}^{k-j-1}
\frac{\tau}{(\tau(k-j-i))^{\frac{(2\beta+\alpha)q}{q-2}}}\right)^{\frac{q-2}q}
\left(\sum_{i=0}^{k-j-1}\tau (C_{\tau,j,j+i}^{\text{Lip}})^q\right)^{\frac2q}\\
&\leq\left(\int_0^Tt^{-\frac{(2\beta+\alpha)q}{q-2}}\md t\right)^{\frac{q-2}q}\left(\sum_{i=0}^{k-j-1}\tau (C_{\tau,j,j+i}^{\text{Lip}})^q\right)^{\frac2q}.
\end{aligned}
\end{equation}
The integral in the last line is finite since
$\frac{(2\beta+\alpha)q}{q-2}=\frac{(2\beta+\alpha)}{1-2/q}
$$<\frac{(2\beta+\alpha)}{1-(1-\alpha-2\beta)}$$=1$.

Combining \eqref{SPDELipschitzEstimate5}, \eqref{SPDELipschitzEstimate6}, \eqref{SPDELipschitzEstimate7} and using the equivalence of norms in $\mathbb R^3$, we obtain
\begin{equation}\label{SPDELipschitzEstimate8}
(C_{\tau,j,k}^{\text{Lip}})^q\leq C_0\left( 1 + \sum_{i=0}^{k-j-1}\tau (C_{\tau,j,j+i}^{\text{Lip}})^q\right),
\end{equation}
with a constant $C_0$ that depends only on $A$, $f$, $B$, $\alpha$, $\beta$, $\sigma$ and $T$.
Since \eqref{SPDELipschitzEstimate8} holds for arbitrary $K\in\N$ and $1\leq j\leq k\leq K$, we can apply a discrete version of Gronwall's lemma and obtain
\begin{equation*}
(C_{\tau,j,k}^{\text{Lip}})^q\leq e^{(k-j)\tau C_0}C_0\leq e^{TC_0}C_0.%,\qquad 0\leq i\leq k-j-1.
\end{equation*}
for all $1\leq j\leq k\leq K$, $K\in\N$ and $\tau=T/K$.
%Since $1\leq j\leq k\leq K$ as well as  $K\in\mathbb N$ have been chosen arbitrary,
It follows that the assertion of the proposition holds in this first case with
\begin{equation*}
C:=(e^{TC_0}C_0)^{1/q}.
\end{equation*}

\textbf{Case 2.} $\sigma\leq 0$ and $\beta+\alpha/2\leq 0$. A combination of \eqref{SPDELipschitzEstimate1} with \eqref{SPDELipschitzEstimate2}, \eqref{SPDELipschitzEstimate3b}, and \eqref{SPDELipschitzEstimate4b} leads to
\begin{equation*}%\label{SPDELipschitzEstimate9}
\begin{aligned}
C_{\tau,j,k}^{\text{Lip}}\leq 1&+
C_f^{\textup{Lip}} (-\lambda_1)^{\sigma} \sum_{i=0}^{k-j-1} \tau C_{\tau,j,j+i}^{\text{Lip}}\\
&+C_{B}^{\text{Lip}} (\text{Tr}(-A)^{-\alpha})^{1/2} (-\lambda_1)^{\beta+\alpha/2}
\Big(\sum_{i=0}^{k-j-1}\tau(C_{\tau,j,j+i}^{\text{Lip}})^2\Big)^{1/2}.
\end{aligned}
\end{equation*}
Applying H\"{o}lder's inequality with exponent $q_2:=2$ to estimate the first sum over $i$ on the right hand side, we get
\begin{equation*}%\label{SPDELipschitzEstimate9}
\begin{aligned}
C_{\tau,j,k}^{\text{Lip}}\leq 1&+
C_f^{\textup{Lip}} (-\lambda_1)^{\sigma} T^{1/2}\Big(\sum_{i=0}^{k-j-1} \tau (C_{\tau,j,j+i}^{\text{Lip}})^2\Big)^{1/2}\\
&+C_{B}^{\text{Lip}} (\text{Tr}(-A)^{-\alpha})^{1/2} (-\lambda_1)^{\beta+\alpha/2}
\Big(\sum_{i=0}^{k-j-1}\tau(C_{\tau,j,j+i}^{\text{Lip}})^2\Big)^{1/2},
\end{aligned}
\end{equation*}
which leads to
\begin{equation*}%\label{SPDELipschitzEstimate9}
(C_{\tau,j,k}^{\text{Lip}})^2\leq C\Big( 1+ \sum_{i=0}^{k-j-1} \tau (C_{\tau,j,j+i}^{\text{Lip}})^2\Big),
\end{equation*}
where the constant $C\in(0,\infty)$ depends only on $A$, $f$, $B$, $\alpha$, $\beta$, $\sigma$ and $T$. As in Case~1, an application of Gronwall's lemma proves the assertion in this second case.

\textbf{Case 3.} $\sigma\in (0,1)$ and $\beta+\alpha/2\leq 0$. In this situation, we combine \eqref{SPDELipschitzEstimate1} with \eqref{SPDELipschitzEstimate2}, \eqref{SPDELipschitzEstimate3} and \eqref{SPDELipschitzEstimate4b} to get
\begin{equation*}%\label{SPDELipschitzEstimate9}
\begin{aligned}
C_{\tau,j,k}^{\text{Lip}}\leq 1&+
C_f^{\text{Lip}}\sum_{i=0}^{k-j-1}\frac{\tau}{(\tau(k-j-i))^\sigma}C_{\tau,j,j+i}^{\text{Lip}}\\
&+C_{B}^{\text{Lip}} (\text{Tr}(-A)^{-\alpha})^{1/2} (-\lambda_1)^{\beta+\alpha/2}
\Big(\sum_{i=0}^{k-j-1}\tau(C_{\tau,j,j+i}^{\text{Lip}})^2\Big)^{1/2}.
\end{aligned}
\end{equation*}
Setting
\begin{equation*}
q_3:=\frac{1}{1-\sigma}+2
\end{equation*}
and following the line of argumentation from the first case with $q_3$ instead of $q$ we reach our goal also in this situation.

\textbf{Case 4.} $\sigma\leq 0$ and $\beta+\alpha/2\in (0,1/2)$. Combine \eqref{SPDELipschitzEstimate1}, \eqref{SPDELipschitzEstimate2}, \eqref{SPDELipschitzEstimate3b} and \eqref{SPDELipschitzEstimate4} to get
\begin{equation*}
\begin{aligned}
C_{\tau,j,k}^{\text{Lip}}\leq 1&
+C_f^{\textup{Lip}} (-\lambda_1)^{\sigma} \sum_{i=0}^{k-j-1} \tau C_{\tau,j,j+i}^{\text{Lip}}\\
&+C_{B}^{\text{Lip}}(\text{Tr}(-A)^{-\alpha})^{1/2}\left(\sum_{i=0}^{k-j-1}
\frac{\tau}{(\tau(k-j-i))^{2\beta+\alpha}}(C_{\tau,j,j+i}^{\text{Lip}})^2\right)^{1/2}.
\end{aligned}
\end{equation*}
Arguing as in the third case with
$$
q_4:=\frac{1}{1/2-(\beta+\alpha/2)}+2
$$
instead of $q_3$, we get the estimate we need to finish the proof.
\end{proof}

We conclude with the proof of our main result.

\begin{proof}[Proof of Theorem~\ref{thm:central-theorem-in-stochastic-section}]
The assertion follows from Proposition~\ref{prop:OverallErrorEstimateSPDE} combined with Proposition~\ref{prop:LipschitzBoundSPDE} by using the elementary estimates
\[\frac 1K\leq\frac 1{K^\delta}=T^{-\delta}\tau^\delta\quad\text{ and }\quad\sum_{j=0}^{K-1}\epsilon_k\leq T\tau^\delta.\qedhere\]
\end{proof}

%% Non-BibTeX users please use

% Addresses
\section*{}
%\ifcase\customizing
%\else
%Corresponding author: Stefan Kinzel\\
%E-Mail: kinzel@mathematik.uni-marburg.de\\
%Phone: +49 6421-28-25483, Fax: +49 6421-28-26945.  \\[2ex]
%\fi
\newpage
\noindent Petru~A.~Cioica, Stephan~Dahlke, Ulrich~Friedrich and Stefan~Kinzel \\
Philipps-Universit{\"a}t Marburg \\
FB Mathematik und Informatik, AG Numerik/Optimierung \\
Hans-Meerwein-Strasse \\
35032 Marburg, Germany \\
\{cioica, dahlke, friedrich, kinzel\}@mathematik.uni-marburg.de \\[2ex]
\noindent Nicolas~D{\"o}hring, Felix~Lindner and Klaus~Ritter \\
TU Kaiserslautern \\
Department of Mathematics, Computational Stochastics Group \\
Erwin-Schr{\"o}dinger-Strasse \\
67663 Kaiserslautern, Germany \\
\{doehring, lindner, ritter\}@mathematik.uni-kl.de \\[2ex]
\noindent Ren{\'e}~L.~Schilling \\
TU Dresden \\
FR Mathematik, Institut f{\"u}r Mathematische Stochastik \\
01062 Dresden, Germany \\
rene.schilling@tu-dresden.de \\[2ex]
\noindent Thorsten~Raasch \\
Johannes Gutenberg-Universit{\"a}t Mainz \\
Institut f{\"u}r Mathematik, AG Numerische Mathematik \\
Staudingerweg 9 \\
55099 Mainz, Germany \\
raasch@uni-mainz.de

\end{document}